\newtheorem{theorem}{Theorem}[section]
\newtheorem{corollary}[theorem]{Corollary}
\newtheorem{proposition}[theorem]{Proposition}
\theoremstyle{definition}
\newtheorem{example}[theorem]{Example}
\newtheorem{definition}[theorem]{Definition}
\newtheorem{problem}[theorem]{Problem}
\newtheorem{remark}[theorem]{Remark}
\numberwithin{equation}{section}
\newcommand{\co}{\colon\,}
\newcommand{\bT}{\mathbb T}
\newcommand{\bR}{\mathbb R}
\newcommand{\bC}{\mathbb C}
\newcommand{\bZ}{\mathbb Z}
\newcommand{\ubZ}{\underline{\mathbb Z}}
\newcommand{\bP}{\mathbb P}
\newcommand{\bH}{\mathbb H}
\newcommand{\bN}{\mathbb N}
\newcommand{\fm}{\mathfrak m}
\newcommand{\fp}{\mathfrak p}
\newcommand{\cB}{\mathcal B}
\newcommand{\cF}{\mathcal F}
\newcommand{\cH}{\mathcal H}
\newcommand{\cK}{\mathcal K}
\newcommand{\cR}{\mathcal R}
\newcommand{\cT}{\mathcal T}
\newcommand{\pt}{\text{pt}}
\newcommand{\lp}{\textup{(}}
\newcommand{\rp}{\textup{)}}
\newcommand{\Ind}{\operatorname{Ind}}
\newcommand{\Prim}{\operatorname{Prim}}
\newcommand{\Tr}{\operatorname{Tr}}
\newcommand{\Ca}{$C^*$-alge\-bra}
\newcommand{\alert}{\emph}
\newcommand{\psc}{positive scalar curvature}
\newcommand{\op}{^{\text{op}}}
\newcommand{\trans}{^{\text{t}}}
\title[Real $C^*$-Algebras]{Structure and applications\\ of real {\Ca}s}
\author{Jonathan Rosenberg}
\address{Department of Mathematics\\
University of Maryland\\
College Park, MD 20742-4015, USA} 
\email[Jonathan Rosenberg]{jmr@math.umd.edu}
\thanks{Partially supported by NSF grant DMS-1206159.  I would like to
thank Jeffrey Adams and Ran Cui for helpful discussions about Section
\ref{sec:rep} and Patrick Brosnan for helpful discussions about
Example \ref{ex:K3}.} 
\dedicatory{Dedicated to Dick Kadison, with admiration and
  appreciation}
\urladdr{http://www2.math.umd.edu/\raisebox{-3pt}{~}jmr/}
\begin{document}

\begin{abstract}
For a long time, practitioners of the art of operator algebras always
worked over the complex numbers, and nobody paid much attention 
to real {\Ca}s. Over the last thirty years, that situation has changed, and
it's become apparent 
that real {\Ca}s have a lot of extra structure not evident from their
complexifications. At the same time, interest in real {\Ca}s
has been driven by a number of compelling applications, for example in the 
classification of \alert{manifolds of \psc},  in
\alert{representation theory}, and in the
study of \alert{orientifold string theories}. We will discuss a 
number of interesting examples of these, and how the real Baum-Connes
conjecture plays an important role.
\end{abstract}
\keywords{real $C^*$-algebra, orientifold, $KR$-theory, twisting, T-duality,
  real Baum-Connes conjecture, assembly map, positive scalar
  curvature, Frobenius-Schur indicator}
\subjclass[2010]{Primary 46L35; Secondary 19K35 19L64 22E46  81T30
  46L85 19L50.} 

\maketitle

\section{Real {\Ca}s}
\label{sec:realC}

\begin{definition}
\label{def:realC}
A \emph{real {\Ca}} is a Banach $*$-algebra $A$ over $\bR$ isometrically
$*$-isomorphic to a norm-closed $*$-algebra of bounded operators on a
real Hilbert space.
\end{definition}
\begin{remark}
There is an equivalent abstract definition: a real {\Ca} is a real
Banach $*$-algebra $A$ satisfying the $C^*$-identity
$\Vert a^*a\Vert=\Vert a\Vert^2$ (for all $a\in A$) and \emph{also}
having the property that for all $a\in A$, $a^*a$ has spectrum
contained in $[0,\infty)$, or equivalently, having the property that
$\Vert a^*a\Vert\le \Vert a^*a+b^*b\Vert$ for all $a,\,b\in A$
\cite{MR0172132,MR0270162}. 
\end{remark}

Books dealing with real {\Ca}s include
\cite{MR677280,MR1267059,MR1995682}, though they all have a slightly
different emphasis from the one presented here.

\begin{theorem}[``Schur's Lemma'']
\label{thm:Schur}
Let $\pi$ be an irreducible representation of a real {\Ca} $A$ on a real
Hilbert space $\cH$. Then the commutant $\pi(A)'$ of the representation must be
$\bR$, $\bC$, or $\bH$.
\end{theorem}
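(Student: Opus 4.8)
The plan is to study $M := \pi(A)'$ as an abstract real Banach algebra: I will show that every nonzero element of $M$ is invertible, and then invoke the structure theory of real Banach division algebras. First note that $M$ is, automatically, a real von Neumann algebra---being the commutant of the self-adjoint family $\pi(A)$---so in particular it is norm-closed, unital, and closed under the adjoint. The elementary input from irreducibility is that $M$ has no nontrivial projections: if $p = p^* = p^2$ lies in $M$, then $p\cH$ is a closed $\pi(A)$-invariant subspace, whence $p\cH \in \{0,\cH\}$ and $p \in \{0,I\}$.

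Next I would show that every self-adjoint $T \in M$ is a real scalar. This is where weak closure is used: $M$ contains all spectral projections of $T$, and by the previous paragraph these are trivial, so the spectrum of $T$ reduces to a single point and $T = \lambda I$ with $\lambda \in \bR$. Hence every $S \in M$ can be written $S = \lambda I + K$ with $\lambda \in \bR$ and $K^* = -K$. For such a $K$, the element $K^2 = -(K^*K)$ is self-adjoint and negative, so $K^2 = -\mu I$ with $\mu \ge 0$; a two-line computation gives $S^*S = SS^* = (\lambda^2+\mu)I$, and $\lambda^2 + \mu = 0$ would force $\lambda = 0$ and $\mu = 0$, hence $\Vert K\Vert^2 = \Vert K^*K\Vert = \Vert K^2\Vert = 0$, i.e.\ $S = 0$. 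Thus every nonzero $S \in M$ is invertible, with $S^{-1} = (\lambda^2+\mu)^{-1}S^*$.

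At this stage $M$ is a unital real Banach algebra in which every nonzero element is invertible, and the desired conclusion $M \cong \bR$, $\bC$, or $\bH$ is exactly the real form of the Gelfand--Mazur theorem (a classical fact about real Banach division algebras; alternatively, each element generates a commutative real Banach division algebra and is therefore a root of a real quadratic, and one concludes by the classification of such algebras). If one prefers to avoid citing that result, one can instead finish by hand: on the space $\mathfrak k$ of skew-adjoint elements of $M$ the prescription $K_1K_2 + K_2K_1 = -2B(K_1,K_2)\,I$ defines a positive-definite symmetric form, so $\bR I \oplus \mathfrak k$ is a Clifford algebra that is also a division algebra; this pins down $\dim_{\bR}\mathfrak k \in \{0,1,3\}$ and identifies $M$ with $\bR$, $\bC$, or $\bH$ accordingly.

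I do not anticipate a serious obstacle. The only non-formal ingredient is the fact that a real von Neumann algebra contains the spectral projections of its self-adjoint elements; this follows from the spectral theorem over $\bR$ (or by complexifying and then descending) and is entirely standard, so it should not need to be developed here.
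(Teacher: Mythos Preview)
Your proposal is correct and follows essentially the same approach as the paper: show that self-adjoint elements of the commutant are scalar via spectral projections and irreducibility, deduce that every nonzero element is invertible, and then invoke Mazur's theorem (which you call the real Gelfand--Mazur theorem). The only cosmetic difference is in the middle step---the paper observes directly that $y^*y$ and $yy^*$ are scalars with matching spectra, while you decompose $S=\lambda I+K$ and compute $S^*S=(\lambda^2+\mu)I$; both routes are short and equivalent. Your optional Clifford-algebra finish is not in the paper and is phrased a bit loosely (the algebra $M=\bR I\oplus\mathfrak k$ is a \emph{quotient} of $Cl(\mathfrak k,B)$, not the Clifford algebra itself), but it is not needed since you already cite Mazur.
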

\begin{proof}
Since $\pi(A)'$ is itself a real {\Ca} (in fact a real von Neumann
algebra), it is enough to show it is a division algebra over $\bR$,
since by Mazur's Theorem \cite{zbMATH02514879} (variants of
the proof are given in \cite[Theorem 3.6]{MR0172132} and \cite{MR2233367}), 
$\bR$, $\bC$, and $\bH$ are the only normed division algebras over
$\bR$.\footnote{Historical note: According to \cite{MR891589}, Mazur
  presented this theorem in Lw\'ow in 1938.  Because of space
  limitations in \emph{Comptes Rendus}, he never published the proof,
  but his original proof is reproduced in \cite{MR0448079} as well
  as in \cite{Mazet}, which also includes a copy of Mazur's original
  hand-typed manuscript, with the proof included.}  Let $x$
be a self-adjoint element of $\pi(A)'$. If $p\in 
\pi(A)'$ is a spectral projection of $x$, then  $p\cH$ and
$(1-p) \cH$ are both invariant subspaces of $\pi(A)$. By
irreducibility, either $p=1$ or $1-p=1$.  So this shows $x$ must be
of the form $\lambda\cdot 1$ with $\lambda\in \bR$. Now if
$y\in \pi(A)'$, $y^*y = \lambda\cdot 1$ with $\lambda\in \bR$, and
similarly, $yy^*$ is a real multiple of $1$.  Since the spectra of
$y^*y$ and $yy^*$ must coincide except perhaps for $0$, $y^*y=yy^*=
\lambda=\Vert y\Vert^2$ 
and either $y=0$ or else $y$ is invertible (with inverse $\Vert
y\Vert^{-2} y^*$). So $\pi(A)'$ is a division algebra.
\end{proof}
\begin{corollary}
\label{cor:3types}
The irreducible $*$-representations of a real {\Ca} can be classified
into three types: real, complex, and quaternionic. {\lp}All of these can
occur, as one can see from the examples of $\bR$, $\bC$, and $\bH$ acting on
themselves by left translation.{\rp}
\end{corollary}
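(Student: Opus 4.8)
The plan is to read the trichotomy directly off Theorem \ref{thm:Schur}. Given an irreducible $*$-representation $\pi$ of the real {\Ca} $A$, that theorem tells us the commutant $\pi(A)'$ is one of $\bR$, $\bC$, or $\bH$; these three are pairwise non-isomorphic as $\bR$-algebras (they have $\bR$-dimensions $1$, $2$, $4$), so exactly one alternative holds. Accordingly I would \emph{define} $\pi$ to be of \emph{real}, \emph{complex}, or \emph{quaternionic} type. Since the commutant is visibly an invariant of the unitary equivalence class of $\pi$, so is the type, and this already gives the asserted classification of irreducible representations into three mutually exclusive families.

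For the parenthetical assertion I would check that each of $\bR$, $\bC$, $\bH$, acting on itself by left multiplication, is irreducible and realizes the corresponding type. Irreducibility in all three cases uses only that $D\in\{\bR,\bC,\bH\}$ is a division algebra: if $V\ne 0$ is a closed $\bR$-subspace invariant under all left multiplications and $0\ne v\in V$, then $Dv\subseteq V$, and $Dv=D$ because $v$ is invertible, so $V=D$. To compute the commutant, observe that an $\bR$-linear operator $T$ on $D$ commuting with $L_a$ for all $a\in D$ satisfies $T(a)=T(L_a 1)=L_a T(1)=a\,T(1)$, so $T$ is right multiplication $R_{T(1)}$; conversely each $R_b$ is $\bR$-linear and commutes with every $L_a$ by associativity. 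Hence $\pi(D)'=\{R_b:b\in D\}\cong D\op\cong D$ (this being just $D$ when $D=\bR$ or $\bC$). So $\bR$ gives a representation of real type, $\bC$ one of complex type, and $\bH$ one of quaternionic type, and all three types occur.

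There is essentially no obstacle: the entire content sits in Theorem \ref{thm:Schur}, and the only points that need a moment's care are the bookkeeping in the examples — namely that right multiplication by $i\in\bC$, or by a quaternion, is $\bR$-linear and commutes with left multiplication (associativity), and that, conversely, every operator in the commutant arises this way, so that the commutant is genuinely all of $D\op$ and not something smaller.
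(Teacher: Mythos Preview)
Your proposal is correct and is exactly the approach the paper intends: the corollary is stated in the paper without proof, as an immediate consequence of Theorem~\ref{thm:Schur}, and you have simply written out that deduction together with the routine verification of the three examples. There is nothing to add; your handling of the commutant of $D$ acting on itself (identifying it with right multiplications, hence with $D\op\cong D$) is the standard and expected argument.
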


Given a real {\Ca} $A$, its complexification $A_{\bC}=A+iA$ is a complex
{\Ca}, and comes with a real-linear $*$-automorphism $\sigma$ with
$\sigma^2 = 1$, namely complex conjugation (with $A$ as fixed
points). Alternatively, we can consider $\theta(a) =
\sigma(a^*)=(\sigma(a))^*$. Then $\theta$ is a (complex linear)
$*$-antiautomorphism of $A_{\bC}$ with $\theta^2=1$. Thus we can
classify real {\Ca}s by classifying their complexifications and then
classifying all possibilities for $\sigma $ or $\theta$.  This raises
a number of questions:
\begin{problem}
\label{probl1}  
Given a complex {\Ca} $A$, is it the complexification of a real {\Ca}?
Equivalently, does it admit a $*$-antiauto\-morphism $\theta$ with
$\theta^2=1$? 
\end{problem}
The answer to this in general is no.  For example, Connes
\cite{MR0435864,MR0370209} showed that there are factors not
anti-isomorphic to themselves, hence admitting no real form.  Around
the same time (ca.\ 1975), Philip Green (unpublished) observed that a stable
continuous-trace algebra over $X$ with Dixmier-Douady invariant
$\delta\in H^3(X, \bZ)$ cannot be anti-isomorphic to itself unless
there is a self-homeomorphism of $X$ sending $\delta$ to $-\delta$.
Since it is easy to arrange for this not to be the case, there are
continuous-trace algebras not admitting a real form.

By the way, just because a factor is anti-isomorphic to itself,
that does not mean it has an self-antiautomorphism of period $2$, and
so it may not admit a real form. Jones constructed an example in
\cite{MR585235}.

Secondly we have:
\begin{problem}
\label{probl2}
Given a complex {\Ca} $A$ that admits a real form, how many distinct
such forms are there? 
Equivalently, how many conjugacy classes are there of
$*$-antiautomorphisms $\theta$ with $\theta^2=1$? 
\end{problem}
In general there can be more than one class of real forms.  For
example, $M_2(\bC)$ is the complexification of two distinct real
{\Ca}s, $M_2(\bR)$ and $\bH$.  From this one can easily see that
$\cK(\cH)$ and $\cB(\cH)$, the compact and bounded operators on a
separable infinite-dimensional Hilbert space, each have two distinct
real forms.  For example, $\cK(\cH)$ is the complexification of both
$\cK(\cH_{\bR})$ and $\cK(\cH_{\bH})$. This makes the following
theorem due independently to St{\o}rmer and to Giordano and Jones
all the more surprising and remarkable.
\begin{theorem}[{\cite{MR563372,MR564145,MR728909}}]
\label{thm:Stormer}
The hyperfinite II$_1$ factor $R$ has a unique real form.
\end{theorem}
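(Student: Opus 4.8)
By the discussion preceding Problem~\ref{probl2}, classifying the real forms of $R$ is the same as classifying, up to conjugacy in $\Aut(R)$, the involutive $*$-antiautomorphisms $\theta$ of $R$ (equivalently the conjugate-linear involutive $*$-automorphisms $\sigma=\theta\circ(\ )^*$). Since $R$ is a II$_1$ factor it has a unique normal tracial state $\tau$, and the trace property forces $\tau\circ\theta=\tau$; hence $\theta$ extends to the GNS space $L^2(R,\tau)$, and finite-dimensional approximations may be carried out $\theta$-equivariantly in $\Vert\cdot\Vert_2$. For existence, take $R_\bR=\overline{\bigotimes}_{n\ge1}M_2(\bR)$, the weak closure of the algebraic tensor product in its trace representation; then $(R_\bR)_\bC\cong R$ and coordinatewise transpose is a distinguished involutive $*$-antiautomorphism $\theta_0$ of $R$. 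The goal is to show that every $\theta$ is conjugate to $\theta_0$.

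The proof rests on some elementary bookkeeping over $\bR$. From $\bH\otimes_\bR\bH\cong M_4(\bR)\cong M_2(\bR)\otimes_\bR M_2(\bR)$ one gets, by regrouping tensor factors in pairs, that $\overline{\bigotimes}_{n\ge1}\bH\cong R_\bR$, that $\bH\,\overline{\otimes}\,R_\bR\cong R_\bR$, and that $M_k(\bR)\,\overline{\otimes}\,R_\bR\cong R_\bR$ for every $k$. More generally, any countable weak-closure tensor product $\overline{\bigotimes}_n M_{d_n}(D_n)$ with $D_n\in\{\bR,\bH\}$ and $\prod_n d_n=\infty$ is a real injective II$_1$ factor whose complexification is $R$, and all such algebras are mutually isomorphic---in particular each is $\cong R_\bR$. (A real form of the factor $R$ has centre $\bR$ rather than $\bC$, since $\bC\otimes_\bR\bC\cong\bC\oplus\bC$ is not a field; so summands of complex type do not arise, cf.\ Corollary~\ref{cor:3types}.)

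Now let $\theta$ be an arbitrary involutive $*$-antiautomorphism of $R$. The crucial step is a \emph{$\theta$-equivariant local approximation}: produce an increasing sequence $A_1\subset A_2\subset\cdots$ of $\theta$-invariant finite-dimensional \emph{subfactors} of $R$ (so $A_n\cong M_{d_n}(\bC)$ and $\theta|_{A_n}$ is an involutive $*$-antiautomorphism of a matrix algebra), with $\bigcup_n A_n$ weakly dense and with $A_{n+1}$ decomposing as $A_n\otimes(A_n'\cap A_{n+1})$ compatibly with $\theta$. Granting this, $(R,\theta)\cong\overline{\bigotimes}_n(E_n,\psi_n)$ where $E_n=M_{d_n}(\bC)$ and $\psi_n$ is an involutive $*$-antiautomorphism of orthogonal or symplectic type, with real form $M_{d_n}(\bR)$ or $M_{d_n/2}(\bH)$ respectively. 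Passing to real forms, the real form of $(R,\theta)$ is some $\overline{\bigotimes}_n M_{d_n}(D_n)$ as in the previous paragraph, hence $\cong R_\bR$. Since $\theta$ was arbitrary, $R$ has, up to conjugacy, exactly one real form.

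The main obstacle is the local approximation. Hyperfiniteness of $R$ yields finite-dimensional subalgebras $B$ almost containing any prescribed finite set, but $\theta(B)$ need not commute with $B$; one must perturb---using a relative Dixmier/Rohlin-type estimate together with the $\theta$-equivariance noted above---to replace $B$ by a nearby $\theta$-invariant finite-dimensional subalgebra, then enlarge it to a $\theta$-invariant subfactor (for instance a $\theta$-invariant $M_k(\bC)\oplus M_k(\bC)$ with $\theta$ interchanging the summands embeds $\theta$-invariantly in $M_{2k}(\bC)$), and finally arrange the tensor-tower structure. This is an equivariant refinement of Connes' theorem that injectivity implies hyperfiniteness; an alternative high-level packaging is to prove the real analogue of the uniqueness of the injective II$_1$ factor---a real injective II$_1$ factor with centre $\bR$ is unique---and to note that every real form of $R$ is such an algebra, but that merely relocates the same difficulty.
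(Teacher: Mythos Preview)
The paper does not give its own proof of this theorem. It is stated with attributions to St{\o}rmer \cite{MR563372,MR564145} and Giordano--Jones \cite{MR728909}, and the surrounding text only draws a consequence (that $R_\bR\otimes_\bR\bH\cong R_\bR$) before moving on. So there is nothing in the paper to compare your argument against at the level of proof; the paper treats the result as a black box from the literature.

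As for the content of your sketch: the overall architecture is the right one and is indeed close in spirit to the original proofs. The reduction to conjugacy of involutive $*$-antiautomorphisms, the observation that $\tau\circ\theta=\tau$, the existence argument via $\overline{\bigotimes}M_2(\bR)$, and the stability facts coming from $\bH\otimes_\bR\bH\cong M_4(\bR)$ are all correct and to the point. You are also right that once one has a $\theta$-compatible tensor tower of matrix factors, the classification of involutive $*$-antiautomorphisms on $M_n(\bC)$ (orthogonal vs.\ symplectic) finishes the job via the regrouping trick.

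That said, you have correctly located---and essentially deferred---the entire difficulty. The ``$\theta$-equivariant local approximation'' step is not a routine perturbation: producing a $\theta$-invariant finite-dimensional subfactor close to a given finite set, and then iterating so that the relative commutants are themselves $\theta$-invariant factors, is precisely the substance of the Giordano--Jones argument (or, in your alternative packaging, the real analogue of Connes' uniqueness theorem). Your parenthetical about embedding a swapped pair $M_k\oplus M_k$ into $M_{2k}$ is fine, but the genuinely hard case is when $\theta(B)$ is merely \emph{close} to $B$ in $\Vert\cdot\Vert_2$ and one must find a nearby $\theta$-invariant algebra; a ``relative Dixmier/Rohlin-type estimate'' is a gesture at the right toolbox, not an argument. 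So what you have written is a faithful high-level outline rather than a proof, and you flag this honestly. If you want it to stand on its own, the equivariant approximation step needs either a precise statement and proof or a pinpoint citation to where it is carried out in \cite{MR563372,MR564145,MR728909}.
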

This has a rather surprising consequence: if $R_{\bR}$ is the (unique) real
hyperfinite II$_1$ factor, then $R_{\bR}\otimes_{\bR} \bH\cong R_{\bR}$
(since this is a real form of $R\otimes M_2(\bC)\cong R$).
In fact we also have
\begin{theorem}[{\cite{MR728909}}]
\label{thm:Stormer2}
The injective II$_\infty$ factor has a unique real form.
\end{theorem}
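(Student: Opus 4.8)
The plan is to reduce to the $\mathrm{II}_1$ case already settled in Theorem~\ref{thm:Stormer} and then ``stabilize back up.'' Write the injective $\mathrm{II}_\infty$ factor as $N\cong R\otimes\cB(\cH)$ (von Neumann algebra tensor product, $\cH$ separable, $R$ the hyperfinite $\mathrm{II}_1$ factor), and identify a real form of $N$ with the fixed-point algebra $A=N^{\sigma}$ of a period-$2$ conjugate-linear $*$-automorphism $\sigma$ (equivalently, with the period-$2$ $*$-antiautomorphism $\theta=\sigma(\,\cdot\,)^*$). Note that $A$ is a \emph{real} factor: its center is a real form of $Z(N)=\bC$, and it cannot be $\bC$ (else $N=A_{\bC}$ would have center $\bC\otimes_{\bR}\bC\cong\bC\oplus\bC$), so $Z(A)=\bR$. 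The first step is to locate a finite projection inside $A$. Since the trace $\tau$ on $N$ is unique up to a positive scalar, $\tau\circ\sigma=\lambda\tau$, and composing with $\sigma$ again forces $\lambda=1$; thus $\tau$ restricts to a faithful normal semifinite real-valued trace on $A$. Picking $y\in N_+$ with $0<\tau(y)<\infty$, the element $y+\sigma(y)$ lies in $A_+$, is nonzero, and has finite trace, so a suitable spectral projection $p=\mathbf 1_{[\varepsilon,\infty)}(y+\sigma(y))$ lies in $A$, is nonzero, and satisfies $\tau(p)<\infty$.

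Since $p\in A$, the compression $pAp$ is a real form of $pNp$, and $pNp$ is a finite injective factor, hence $\cong R$ by Connes's uniqueness theorem for the injective $\mathrm{II}_1$ factor. Theorem~\ref{thm:Stormer} then gives $pAp\cong R_{\bR}$, the unique real hyperfinite $\mathrm{II}_1$ factor.

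For the stabilization, note that $A$ is properly infinite and semifinite (it inherits the type $\mathrm{II}_\infty$ of $N=A_{\bC}$). Using the comparison theory of projections in the real factor $A$ one can write $1=\sum_{i\ge1}p_i$ with the $p_i$ mutually orthogonal and each Murray--von Neumann equivalent in $A$ to $p$ (the ``dimension'' of $\sum_{i\ge1}p$ is $\infty=\tau(1)$). Choosing partial isometries $v_i\in A$ with $v_i^*v_i=p$ and $v_iv_i^*=p_i$, and setting $e_{ij}=v_iv_j^*\in A$, one gets a system of matrix units in $A$ with $\sum_i e_{ii}=1$ and $e_{11}=p$. Because $\operatorname{span}_{\bR}\{e_{ij}:1\le i,j\le n\}\cong M_n(\bR)$ as real $*$-algebras, the real von Neumann algebra $B$ generated by the $e_{ij}$ is $\cB(\cH_{\bR})$, and a direct computation shows that $A$ is generated by $B$ together with $B'\cap A$, the latter carried isomorphically onto $pAp$ by $y\mapsto\sum_i e_{i1}ye_{1i}$. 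Since $B$ is a type $\mathrm I$ factor, $A\cong B\otimes_{\bR}(B'\cap A)\cong\cB(\cH_{\bR})\otimes_{\bR}R_{\bR}$. As the right-hand side does not depend on $A$, all real forms of $N$ are isomorphic, and the theorem follows.

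I expect the main obstacle to be the stabilization step, specifically its reliance on the structure theory of real von Neumann algebras: that $A$ inherits the type (hence the proper infiniteness and the semifinite trace) of its complexification, that comparison and dimension theory for projections in $A$ behave as over $\bC$, and that a type $\mathrm I$ subfactor splits off as a tensor factor over $\bR$. This is also where the real-versus-quaternionic dichotomy threatens to reappear---one might anticipate a second real form $R_{\bR}\otimes_{\bR}\cB(\cH_{\bH})$---but it does not: the matrix units $e_{ij}$ produced above pin down $\cB(\cH_{\bR})$ rather than $\cB(\cH_{\bH})$, and, equivalently, the absorption $R_{\bR}\otimes_{\bR}\bH\cong R_{\bR}$ recorded after Theorem~\ref{thm:Stormer} forces $R_{\bR}\otimes_{\bR}\cB(\cH_{\bH})\cong R_{\bR}\otimes_{\bR}\cB(\cH_{\bR})$.
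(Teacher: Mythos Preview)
The paper does not actually supply a proof of this theorem; it merely states the result and cites \cite{MR728909}. So there is no ``paper's own proof'' to compare your argument against, and your write-up is being judged on its own merits as a proof sketch.

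Your reduction to the $\mathrm{II}_1$ case is the natural strategy and is essentially the approach in the cited literature. The steps (i) $\sigma$-invariance of the trace (via uniqueness up to scaling and $\sigma^2=\id$), (ii) production of a nonzero finite projection $p\in A$ by symmetrizing a finite-trace positive element and taking a spectral projection, (iii) identification of $pAp$ as a real form of $pNp\cong R$, hence $pAp\cong R_{\bR}$ by Theorem~\ref{thm:Stormer}, and (iv) stabilization via a real system of matrix units, are all sound in outline. Your remark that the matrix units force $\cB(\cH_{\bR})$ rather than $\cB(\cH_{\bH})$, and that in any case $R_{\bR}\otimes_{\bR}\bH\cong R_{\bR}$ absorbs the distinction, is exactly the right observation.

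The places where the argument is genuinely incomplete are the ones you flag yourself: you are invoking without proof that (a) comparison of projections in a real factor $A$ is governed by the (restricted) trace, so that one can write $1=\sum_i p_i$ with $p_i\sim p$ in $A$; (b) a type~$\mathrm I_\infty$ real subfactor $B\subset A$ with $B'\cap A$ a factor yields a tensor splitting $A\cong B\,\overline\otimes_{\bR}\,(B'\cap A)$; and (c) $A$ inherits proper infiniteness from $N=A_{\bC}$. All three are true and are part of the structure theory of real $W^*$-algebras (see e.g.\ Li's monograph \cite{MR1995682} or the original papers of St{\o}rmer and Giordano), but they are not one-liners, and in a self-contained proof you would need to either cite them precisely or supply the arguments. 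With those references in place, your proof is correct.
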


In the commutative case, there is no difference between
antiautomorphisms and automorphisms.  Thus we get the following
classification theorem.
\begin{theorem}[{\cite[Theorem 9.1]{MR0025453}}]
\label{thm:ArensKap}
Commutative real {\Ca}s are classified by pairs consisting of a
locally compact Hausdorff space $X$ and a self-homeomorphism $\tau$ of
$X$ satisfying $\tau^2=1$. The algebra associated to $(X,\tau)$,
denoted $C_0(X,\tau)$, is $\{f\in C_0(X) \mid f(\tau(x)) =
\overline{f(x)}\ \forall x\in X\}$.
\end{theorem}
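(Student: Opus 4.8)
The plan is to bootstrap from the complex Gelfand--Naimark theorem together with the Banach--Stone description of $*$-automorphisms of a commutative complex \Ca. Given a commutative real \Ca $A$, its complexification $A_{\bC}$ is a commutative complex \Ca, hence $A_{\bC}\cong C_0(X)$ for $X$ its Gelfand spectrum, a locally compact Hausdorff space. The canonical conjugation $\sigma$ on $A_{\bC}$ with fixed-point algebra $A$ transports to a period-$2$ conjugate-linear $*$-automorphism of $C_0(X)$, still called $\sigma$. Writing $c\co f\mapsto\overline f$ for the (conjugate-linear, $*$-preserving) complex-conjugation automorphism of $C_0(X)$, the composite $\alpha:=c\circ\sigma$ is an honest complex-linear $*$-automorphism of $C_0(X)$; by Gelfand duality it is induced by a unique homeomorphism $\tau\co X\to X$, say $\alpha(f)=f\circ\tau$. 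Since $c^2=\id$ we recover $\sigma(f)=\overline{f\circ\tau}$, i.e.\ $\sigma(f)(x)=\overline{f(\tau(x))}$, and then $\sigma^2(f)=f\circ\tau^2$, so $\sigma^2=\id$ is equivalent to $\tau^2=\id$. Identifying $A$ with $A_{\bC}^\sigma$ gives $A\cong\{f\in C_0(X)\mid f(\tau(x))=\overline{f(x)}\ \forall x\}=C_0(X,\tau)$.

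In the other direction, for an arbitrary pair $(X,\tau)$ one checks directly that $C_0(X,\tau)$ is closed under addition, real scalar multiplication, multiplication, the $*$-operation, and uniform limits, so it is a norm-closed real-linear $*$-subalgebra of $C_0(X)$ viewed as a real Banach $*$-algebra; being such a subalgebra it inherits the $C^*$-identity and the condition $\Vert a^*a\Vert\le\Vert a^*a+b^*b\Vert$, hence is a real \Ca by the abstract characterization in the remark following Definition~\ref{def:realC} \lp alternatively, restrict a faithful representation of $C_0(X)$ on a real Hilbert space\rp. Its complexification is $C_0(X)$: every $f\in C_0(X)$ is $\tfrac12(f+\sigma f)+i\cdot\tfrac1{2i}(f-\sigma f)$ with both summands fixed by the conjugation $\sigma(f)(x)=\overline{f(\tau(x))}$, and this decomposition into real and imaginary parts is unique. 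Thus $A\mapsto(X,\tau)$ and $(X,\tau)\mapsto C_0(X,\tau)$ are mutually inverse constructions.

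To promote this to a classification of isomorphism classes I would verify naturality: a $*$-isomorphism $A\to B$ complexifies to a $*$-isomorphism $A_{\bC}\to B_{\bC}$ intertwining the two conjugations, which by Gelfand duality comes from a homeomorphism $X_B\to X_A$ intertwining $\tau_B$ and $\tau_A$; conversely such an equivariant homeomorphism obviously induces a $*$-isomorphism $C_0(X_A,\tau_A)\to C_0(X_B,\tau_B)$. I do not expect any step to be a genuine obstacle: the one point needing care is the reduction from conjugate-linear to complex-linear $*$-automorphisms of $C_0(X)$ \lp handled above by composing with $c$\rp, together with the usual non-unital bookkeeping, where ``self-homeomorphism of $X$'' is read for $X$ the locally compact spectrum, equivalently as a self-homeomorphism of the one-point compactification fixing the point at infinity. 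As a consistency check with Corollary~\ref{cor:3types}: a fixed point $x=\tau(x)$ of $\tau$ yields an irreducible representation of real type \lp evaluation at $x$ has image $\bR$\rp, a free orbit $\{x,\tau(x)\}$ yields one of complex type, and quaternionic representations never arise, as they should not for a commutative algebra.
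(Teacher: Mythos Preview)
Your argument is correct and is essentially the paper's proof: both complexify, invoke Gelfand--Naimark to get $A_\bC\cong C_0(X)$, turn the real structure into a complex-linear $*$-automorphism of $C_0(X)$ (the paper uses $\theta$ directly, noting that an antiautomorphism of a commutative algebra is an automorphism; you compose $\sigma$ with pointwise conjugation $c$, which in the commutative case gives the same map since $\theta=\sigma\circ{*}=\sigma\circ c=c\circ\sigma$), read off $\tau$ by Gelfand duality, and recover $A$ as the fixed points. Your added remarks on naturality, the non-unital bookkeeping, and the consistency check with Corollary~\ref{cor:3types} are fine elaborations but not required for the statement as formulated.
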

\begin{proof}
If $A$ is a commutative real {\Ca}, then $A_\bC\cong C_0(X)$ for some
locally compact Hausdorff space. The $*$-antiautomor\-phism $\theta$
discussed above becomes a $*$-automorphism of $A_\bC$ (since the order
of multiplication is immaterial) and thus comes from a
self-homeomor\-phism $\tau$ of $X$ satisfying $\tau^2=1$. We recover
$A$ as 
\[
\begin{aligned}
\{f\in A_\bC \mid \sigma(f)=f\} &= \{f\in A_\bC \mid \theta(f)=f^*\} \\
 &= \{f\in C_0(X) \mid f(\tau(x)) = \overline{f(x)}\ \forall x\in X\}.
\end{aligned}
\]
In the other direction, given $X$ and $\tau$, the indicated formula
certainly gives a commutative real {\Ca}.
\end{proof}
One could also ask about the classification of real AF algebras.  This
amounts to answering Problems \ref{probl1} and \ref{probl2} for complex
AF algebras $A$ (inductive limits of if finite dimensional {\Ca}s). Since
complex AF algebras are completely classified by $K$-theory ($K_0(A)$ as
an ordered group, plus the order unit if $A$ is unital)
\cite{MR0397420}, one would expect a purely $K$-theoretic solution.
This was provided by Giordano \cite{MR931219}, but the answer is
considerably more 
complicated than in the complex case. Of course this is hardly
surprising, since we already know that even the simplest noncommutative
finite dimensional complex {\Ca}, $M_2(\bC)$, has two two distinct
real structures. Giordano also showed that his invariant is equivalent
to one introduced by Goodearl and Handelman \cite{MR904013}.
We will not attempt to give the precise statement except to say that
it involves all three of $KO_0$, $KO_2$, and $KO_4$. (For example,
one can distinguish the algebras $M_2(\bR)$ and $\bH$, both real forms
of $M_2(\bC)$, by looking at $KO_2$.)  Also, unlike the
complex case, one usually has to deal with torsion in the $K$-groups.

For the rest of this paper, we will focus on the case of separable type I
{\Ca}s, especially those that arise in representation theory.
Recall that if $A$ is a separable type I (complex) {\Ca}, with
primitive ideal space $\Prim A$ (equipped with the Jacobson topology),
then the natural map $\widehat A\to \Prim A$, sending the equivalence class
of an irreducible representation $\pi$ to its kernel $\ker \pi$, is a
bijection, and enables us to put a $T_0$ locally quasi-compact
topology on $\widehat A$.

Suppose $A$ is a complex {\Ca} with a real form. As we have seen, that
means $A$ is equipped with a conjugate-linear $*$-automorphism $\sigma$ with
$\sigma^2 = 1$, or alternatively, with a linear $*$-antiautomorphism
$\theta$ with $\theta^2 = 1$. We can think of $\theta$ as an
isomorphism $\theta\co A\to A\op$ (where $A\op$ is the opposite
algebra, the same complex vector space with the same involution $*$,
but with multiplication
reversed) such that the composite $\theta\op\circ\theta$ is the
identity (from $A$ to $(A\op)\op=A$).

Now let $\pi$ be an irreducible $*$-representation of $A$. We can
think of $\pi$ as a map $A\to \cB(\cH)$, and obviously, $\pi$ induces
a related map (which as a map of sets is exactly the same as $\pi$)
$\pi\op\co A\op \to \cB(\cH)\op$. Composing with $\theta$
and with the standard $*$-antiautomorphism $\tau\co \cB(\cH)\op \to
\cB(\cH)$ (the ``transpose map'')
coming from the identification of $\cB(\cH)$ as the
complexification of $\cB(\cH_\bR)$,
we get a composite map 
\[
\xymatrix{A\ar[r]^(.45)\theta \ar@/_1pc/_{\theta_*(\pi)}[rrr]
& A\op \ar[r]^(.4){\pi\op} & \cB(\cH)\op
  \ar[r]^{\tau} & \cB(\cH)\,.}
\]

One can also see that doing this twice brings us back where we
started, so we have seen:
\begin{proposition}
\label{prop:Ahatinv}
If $A$ is a complex {\Ca} {\lp}for our purposes, separable and type I,
though this is irrelevant here{\rp}
with a real structure {\lp}given by a
$*$-antiautomorphism $\theta$ of period $2${\rp}, then 
$\theta$ induces an involution on $\widehat A$.
\end{proposition}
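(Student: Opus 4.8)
The plan is to take the construction $\pi\mapsto\theta_*(\pi)$ exhibited in the displayed diagram as the whole of the argument and simply verify that it does what is claimed; concretely, I will check that $\theta_*(\pi)$ is again an irreducible $*$-representation of $A$, that its unitary equivalence class depends only on that of $\pi$ (so that one really gets a self-map of $\widehat A$), and that this self-map is an involution. To make the bookkeeping transparent I would first fix a conjugation $J$ on $\cH$ — an antilinear isometric involution — so that the transpose map is $\tau(T)=JT^*J$; this is precisely the $*$-antiautomorphism of $\cB(\cH)$ coming from the real structure, and it satisfies $\tau^2=\id$.

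Two of the three checks are purely formal. For the first, each arrow in the diagram is a $*$-homomorphism once the opposite-algebra structures are accounted for — $\theta\co A\to A\op$ because $\theta$ is a period-$2$ $*$-antiautomorphism, $\pi\op$ because it is $\pi$ read between the opposite algebras, and $\tau\co\cB(\cH)\op\to\cB(\cH)$ because the transpose is a $*$-antiautomorphism — so the composite $\theta_*(\pi)$ is a genuine nondegenerate $*$-representation $A\to\cB(\cH)$. For irreducibility I would observe that $\theta_*(\pi)(A)=\tau(\pi(A))=J\pi(A)J$ as a set of operators, whence its commutant is $J\pi(A)'J$; since $\pi$ is irreducible, $\pi(A)'=\bC\cdot1$, so $\theta_*(\pi)(A)'=\bC\cdot1$ as well (alternatively one computes $\ker\theta_*(\pi)=\theta(\ker\pi)$, which is a primitive ideal). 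The involution property is equally immediate: all three arrows are the same underlying set maps as $\pi$, $\theta$, and $\tau$, so iterating the construction gives $\theta_*(\theta_*(\pi))=\tau^2\circ\pi\circ\theta^2=\pi$ on the nose, using $\tau^2=\id$ and $\theta^2=\id$.

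The one point that calls for any real care — and so the main, if modest, obstacle — is showing that $\pi\mapsto\theta_*(\pi)$ respects unitary equivalence and does not depend on the auxiliary choice of $J$. Here I would argue that if $U\co\cH_1\to\cH_2$ is a unitary with $\pi_2(a)=U\pi_1(a)U^*$ for all $a$, then writing $\theta_*(\pi_i)(a)=J_i\,\pi_i(\theta(a))^*J_i$ with $J_i$ a conjugation on $\cH_i$ and substituting yields $\theta_*(\pi_2)(a)=W\,\theta_*(\pi_1)(a)\,W^*$, where $W=J_2UJ_1$ is a unitary $\cH_1\to\cH_2$ (a composite of two antilinear isometries and one unitary); and that replacing $J$ by another conjugation $J'$ turns $\tau$ into $\Ad(J'J)\circ\tau$ with $J'J$ unitary, hence turns $\theta_*(\pi)$ into a unitarily equivalent representation. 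With these verified, $\pi\mapsto\theta_*(\pi)$ descends to a well-defined involution of $\widehat A$. As a sanity check — and an alternative route that, however, does use the type I hypothesis — one may instead note that $\theta$ is a period-$2$ bijection $A\to A\op$ carrying closed two-sided ideals to closed two-sided ideals, that primitivity of an ideal is inherited under passing to the opposite of a {\Ca} (by the same transpose trick), and hence that $\theta$ induces an involutive homeomorphism of $\Prim A$, which one then transports to $\widehat A$ through the canonical bijection $\widehat A\to\Prim A$; the representation-level argument above is preferable because it is more informative and, as the statement notes, needs no type I assumption.
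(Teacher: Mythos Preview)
Your proposal is correct and follows essentially the same approach as the paper: define $[\pi]\mapsto[\theta_*(\pi)]$ via the displayed composite and verify that applying it twice returns $\pi$ because $\theta^2=\id$ and $\tau^2=\id$. Your argument is in fact more complete than the paper's --- the paper's proof only writes out the involution check (your step~4) and leaves irreducibility and independence of the unitary equivalence class implicit, whereas you spell these out explicitly via the conjugation $J$.
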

\begin{proof}
The involution sends $[\pi]\mapsto [\theta_*(\pi)]$. To show that this
is an involution, let's compute $\theta_*(\theta_*(\pi))$.  By
definition, this is the composite
\[
\xymatrix@C+1ex{A\ar[r]^(.45)\theta \ar@/_1pc/[rrr]
& A\op \ar[r]^(.4){\theta_*(\pi)\op} & \cB(\cH)\op
  \ar[r]^{\tau} & \cB(\cH)}
\]
or
\[
\xymatrix{A\ar[r]^(.45)\theta  \ar@/_1pc/[rrrrr] &
  A\op\ar[r]^(.6 )\theta
  \ar@/^1.5pc/[rrr]^{\theta_*(\pi)\op}
& A \ar[r]^(.4)\pi & \cB(\cH)
  \ar[r]^(.45){\tau} & \cB(\cH)\op\ar[r]^{\tau} & \cB(\cH)},
\]
but $\theta\circ\theta$ and $\tau\circ\tau$ are each the identity, so
this is just $\pi$ again.
\end{proof}
Note that in the commutative case $A=C_0(X)$, the involution
$\theta_*$ on $\widehat A$ is just the original involution on $X$.

With these preliminaries out of the way, we can now begin to analyze
the structure of (separable) real type I {\Ca}s.  Some of this information is
undoubtedly known to experts, but it is surprisingly hard to dig it out
of the literature, so we will try give a complete treatment, without
making any claims of great originality.

The one case which \emph{is} easy to find in the literature concerns
finite-dimensional real {\Ca}s, which are just semisimple Artinian
algebras over $\bR$. The interest in this case comes from the real
group rings $\bR G$ of finite groups $G$, which are precisely of this
type. A convenient reference for the real representation theory of
finite groups is \cite[\S 13.2]{MR0450380}.  A case which is not much
harder is that of real representation theory of
compact groups.  In this case, the associated real {\Ca} is
infinite-dimensional in general, but splits as a ($C^*$-)direct
sum of (finite-dimensional)
simple Artinian algebras over $\bR$.  This case is discussed in
great detail in \cite[Ch.\ 3]{MR0252560}, and is applied to connected
compact Lie groups in \cite[Ch.\ 6 and 7]{MR0252560}.

Recall from Corollary \ref{cor:3types} that the irreducible
representations of a real {\Ca} $A$ are of three types.  How does this
type classification relate to the involution of Proposition
\ref{prop:Ahatinv} on $\widehat{A_\bC}$? The answer (which for the
finite group case appears in \cite[\S 13.2]{MR0450380}) is given as
follows:
\begin{theorem}
\label{thm:AandAC}
Let $A$ be a real {\Ca} and let $A_\bC$ be its complexification. Let
$\pi$ be an irreducible representation of $A$ {\lp}on a real Hilbert
space $\cH${\rp}. If $\pi$ is of real type, then we get an irreducible
representation $\pi_\bC$ of $A_\bC$ on $\cH_\bC$ by complexifying,
and the class of this irreducible representation $\pi_\bC$ is fixed by
the involution of Proposition \ref{prop:Ahatinv}.  If $\pi$ is of 
complex type, then $\cH$ can be made into a complex Hilbert space $\cH^c$
{\lp}whose complex dimension is half the real dimension of $\cH${\rp}
either via the action of $\pi(A)'$ or via the conjugate of this
action, and we get two distinct irreducible representations of $A_\bC$
on $\cH^c$ which are interchanged under the involution of Proposition
\ref{prop:Ahatinv} on $\widehat{A_\bC}$. Finally, if $\pi$ is of
quaternionic type, then $\cH$ can be made into a quaternionic Hilbert space
via the action of $\pi(A)'$. After tensoring with $\bC$, we get a
complex Hilbert space $\cH_\bC$ whose complex dimension is twice the
quaternionic dimension of $\cH$, and we get an irreducible
representation $\pi_\bC$ of $A_\bC$ on $\cH_\bC$ whose class is fixed
by the involution of Proposition \ref{prop:Ahatinv}.

Now suppose further that $A$ is separable and type I, so that $\pi(A)$
contains the compact operators on $\cH$, and in particular, there is
an ideal $\fm$ in $A$ which maps onto the trace-class operators. Thus
$\pi$ has a well-defined ``character'' $\chi$ on $\fm$ in the sense of
\textup{\cite{MR0147925}}, and the representations $\pi_\bC$ of
$A_\bC$ discussed above have characters $\chi_\bC$ on $\fm_\bC$. When
$\pi$ is of real type, $\chi_\bC$ restricted to $\fm$ is just $\chi$
{\lp}and is real-valued{\rp}. When $\pi$ is of quaternionic type,
$\chi_\bC$ restricted to $\fm$ is $\frac{\chi}{2}$. When $\pi$ is of complex
type, the two complex irreducible extensions of $\pi$ have characters
on $\fm$ which are non-real and which are complex conjugates of each
other, and which add up to $\chi$.
\end{theorem}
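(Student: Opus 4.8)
The plan is to derive all three cases, together with the character assertions, from a single construction --- complexifying $\pi$ --- by tracking three pieces of data: the commutant of $\pi_\bC$, the effect of the canonical conjugation on $\cH_\bC$, and the trace. First I would form $\cH_\bC=\cH\otimes_\bR\bC$, with its canonical conjugation $J$ (real-linear, $J^2=1$, fixed points $\cH$), and extend $\pi$ to the $*$-representation $\pi_\bC\co A_\bC\to\cB(\cH_\bC)$. The commutation theorem over $\bR$ gives $\pi_\bC(A_\bC)'=\pi(A)'\otimes_\bR\bC$ inside $\cB(\cH_\bC)=\cB(\cH)\otimes_\bR\bC$, so by Schur's Lemma (Theorem~\ref{thm:Schur}) the commutant $\pi_\bC(A_\bC)'$ is $\bC$, $\bC\oplus\bC$, or $M_2(\bC)$ according as $\pi$ is of \emph{real}, \emph{complex}, or \emph{quaternionic} type. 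Reading off the isotypic decomposition then yields the representation-theoretic part of the theorem: for real type $\pi_\bC$ is irreducible; for complex type $\pi_\bC\cong\rho_1\oplus\rho_2$ with $\rho_1\not\cong\rho_2$ irreducible, cut out by the two minimal central idempotents of $\bC\oplus\bC$, each realizable on the complex Hilbert space $\cH^c$ determined by an element $\iota\in\pi(A)'\cong\bC$ with $\iota^2=-1$ (letting $i\in A_\bC$ act by $\iota$, resp.\ by $-\iota$), so $\rho_i|_A\cong\pi$; and for quaternionic type $\pi_\bC\cong\rho\oplus\rho$ for a single irreducible $\rho$, realizable on $\cH$ made complex via $\bC\hookrightarrow\bH\cong\pi(A)'$, of complex dimension $2\dim_\bH\cH$. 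The identification of the spaces and dimensions here is routine linear algebra over $\bR$, $\bC$, $\bH$, so I would not dwell on it.

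Next I would treat the involution of Proposition~\ref{prop:Ahatinv}. From the definition $\theta_*(\rho)=\tau\circ\rho\op\circ\theta$, with $\theta(a)=\sigma(a^*)$ and $\tau(T)=JT^*J$ (the transpose taken relative to the real structure $\cH\subset\cH_\bC$), a short computation gives $\theta_*(\pi_\bC)(a)=J\,\pi_\bC(\sigma(a))\,J$; since $a\mapsto J\pi_\bC(a)J$ is a conjugate-linear $*$-homomorphism agreeing with $\pi_\bC\circ\sigma$ on $A$, the two coincide, so $\theta_*(\pi_\bC)=\pi_\bC$. In the real case this is exactly the assertion. In the quaternionic case $\pi_\bC$ is $\rho$-isotypic, so $\theta_*$ must fix $[\rho]$. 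In the complex case the point is that $J$ normalizes $\pi_\bC(A_\bC)'=\bC\otimes_\bR\bC$ and acts on it by conjugation in the second tensor factor, which interchanges the two minimal central idempotents of $\bC\otimes_\bR\bC\cong\bC\oplus\bC$; hence $J$ carries the range of $\rho_1$ onto that of $\rho_2$, so $\theta_*$ interchanges $[\rho_1]$ and $[\rho_2]$, which are inequivalent since distinct minimal central idempotents of the commutant cut out disjoint subrepresentations.

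Finally, for the characters I would invoke Godement's theory: with $A$ (hence $A_\bC$) separable and type~I, $\pi(A)$ contains the compacts, $\fm$ maps onto the trace-class operators, $\chi=\Tr\circ\pi$ on $\fm$ is the real trace of $\pi(x)$ on $\cH$, and each complex $\rho$ above has character $\chi_\bC=\Tr_\bC\circ\rho$; the task is to compare $\chi_\bC(x)$ with $\chi(x)$ for $x\in\fm$. For real type, $\pi_\bC(x)=\pi(x)\otimes 1$ on $\cH\otimes_\bR\bC$, so $\Tr_\bC\pi_\bC(x)=\Tr_\bR\pi(x)$ and $\chi_\bC|_\fm=\chi$, which is then manifestly real. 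For quaternionic type, $\pi(x)$ is $\bH$-linear, hence of block form $\left(\begin{smallmatrix}\alpha&-\bar\beta\\\beta&\bar\alpha\end{smallmatrix}\right)$ in a splitting $\bH=\bC\oplus\bC j$, so $\chi_\bC(x)=\Tr_\bC(\pi(x)|_{\cH^c})=2\Re\Tr\alpha$ while $\chi(x)=\Tr_\bR\pi(x)=4\Re\Tr\alpha$, whence $\chi_\bC|_\fm=\tfrac12\chi$. For complex type, the two extensions have characters $\Tr_\bC(\pi(x)|_{\cH^c})$ and, on the conjugate realization, its complex conjugate; these are complex conjugates of one another, they sum to $2\Re\Tr_\bC(\pi(x)|_{\cH^c})=\Tr_\bR\pi(x)=\chi(x)$, and they are non-real, since a real-valued character would equal its own conjugate and so force $\rho_1\cong\rho_2$ by uniqueness of the character of a type~I irreducible, contradicting the previous step.

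The part I expect to be the main obstacle is not any single computation but the bookkeeping: stating the real commutation theorem cleanly in the (possibly infinite-dimensional) situation at hand, and --- more delicately --- fixing once and for all the normalization of the Godement character on $\fm$ for representations of complex and quaternionic type, so that the factors $2$ and $\tfrac12$ emerge consistently with the real-type case. Once those conventions are settled, everything else reduces to linear algebra over $\bR$, $\bC$, and $\bH$.
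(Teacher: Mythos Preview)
Your proposal is correct and follows essentially the same route as the paper: both compute the commutant of the complexification as $\pi(A)'\otimes_\bR\bC\in\{\bC,\,\bC\oplus\bC,\,M_2(\bC)\}$ to read off the decomposition of $\pi_\bC$, verify the involution statement by a direct calculation (yours via $\theta_*(\pi_\bC)=J\pi_\bC(\sigma(\cdot))J$ and the action of $J$ on the commutant, the paper's case-by-case), and compare traces. Two small remarks: your treatment of the complex case via $J$ swapping the minimal central idempotents is a bit cleaner than the paper's contradiction argument, while for the quaternionic character the paper's ``$\pi_\bC$ is two copies of $\rho$, so $\chi=2\chi_\bC$'' is tidier than your block-matrix computation, which as written needs the $\bH=\bC\oplus\bC j$ decomposition and the meaning of $\bar\alpha$ spelled out more carefully.
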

\begin{example}
\label{ex:types}
Before giving the proof, it might be instructive to give some
examples. First let $A=C^*_{\bR}(\bZ)$, the free real {\Ca} on one
unitary $u$. The trivial representation $u\mapsto 1$ is of real type
and complexifies to the trivial representation of $A_\bC=C^*(\bZ)$. 
Similarly the \emph{sign representation} $u\mapsto -1$ is of real type. The
representation $u\mapsto \begin{pmatrix} \cos \phi & \sin \phi \\
-\sin \phi & \cos \phi \end{pmatrix}$ on $\bR^2$ ($\phi$ not a
multiple of $\pi$) is of complex type. Note that this representation
is equivalent to the one given by $u\mapsto \begin{pmatrix} \cos \phi
  & -\sin \phi \\ \sin \phi & \cos \phi \end{pmatrix}$ since these are
conjugate under $\begin{pmatrix} 0&1\\1&0\end{pmatrix}$.
This representation class corresponds to a pair of inequivalent 
irreducible representations of $A_\bC=C^*(\bZ)$ on $\bC$, one given by
$u\mapsto e^{i\phi}$ and one given by $u\mapsto e^{-i\phi}$. The
involution on $\widehat{A_\bC}$ sends one of these to the other.

Next let $A=\bR Q_8$, the group ring of the
quaternion group of order $8$. This has a
standard representation on $\bH\cong \bR^4$ (sending the generators
$i,\,j,\,k \in Q_8$
to the quaternions with the same name) which is of quaternionic
type. Complexifying gives two copies of the unique $2$-dimensional irreducible
complex representation of $A_{\bC}$.

Note incidentally that $D_8$, the dihedral group of order $8$, and $Q_8$
have the same \emph{complex} representation theory.  Keeping track of
the types of representations enables us to distinguish the two groups.
\end{example}  
\begin{proof}[Proof of Theorem \ref{thm:AandAC}]
A lot of this is obvious, so we will just concentrate on the parts
that are not. If $\pi$ is of real type, its commutant is $\bR$ and its
complexification $\pi_\bC$ has commutant $\bC$ and is thus
irreducible. The class of $\pi_\bC$ is fixed by the involution, since
for $a\in A$,
\[
\theta_*(\pi_\bC)(a) = \tau\circ \pi_\bC\op\circ \theta(a) =
 \tau\circ \pi_\bC\op (a^*) = \tau(\pi\op(a^*)) = (\pi(a)\trans)\trans
 = \pi(a).
\]
If $\pi$ is of complex type, we need to show that we get
two distinct irreducible representations of $A_\bC$ which are
interchanged under the involution on $\widehat{A_\bC}$. If this were
not the case, then $\pi$ viewed as an irreducible representation on a
complex Hilbert space $\cH^c$ (via the identification of $\pi(A)'$ with $\bC$)
would extend to
an irreducible complex representation (let's call it $\pi^c$) of
$A_\bC$ which is isomorphic to $\theta_*{\pi^c}$.
Now if $a+ib\in A_\bC$, where $a,\,b\in A$, then $\sigma(a+ib)=a-ib$
and $\theta(a+ib)=a^*+ib^*$.  So
$\theta_*(\pi^c)(a+ib)=\tau\circ\pi_{\bC}\op (a^*+ib^*) =
\overline{\pi(a)} + i\overline{\pi(b)}$, since for operators on $\cH^c$,
$\tau(T^*) = \overline{T}$, the conjugate operator.
The complexification of $\cH$ is canonically identified with
$\cH^c \oplus \overline{\cH^c}$, so the complexification $\pi_\bC$ of
$\pi$ is thus identified with $\pi^c\oplus \theta_*(\pi^c)$.
If this were isomorphic to $\pi^c\oplus \pi^c$, then its commutant
would be isomorphic to $M_2(\bC)$. But the commutant of $\pi_\bC$ must
be the complexification of the commutant of $\pi$, so this is
impossible. If $\pi$ is of quaternionic type, its commutant is
isomorphic to $\bH$, which complexifies to $M_2(\bC)$.  That means the
complexification of $\pi$ has commutant $M_2(\bC)$, and thus
the complexification of $\pi$ is unitarily
equivalent to a direct sum of two copies of an irreducible
representation $\pi_\bC$ of $A_\bC$. That the class of $\pi_\bC$ is fixed by the
involution follows as in the real case.

Now let's consider the part about characters. If $\pi$ is of real
type, $\pi_\bC$ is its complexification and so the characters of
$\pi_\bC$ and of $\pi$ coincide on $\fm$.  (Complexification of
operators preserves
traces.) If $\pi$ is of quaternionic type, its complexification is
equivalent to two copies of $\pi_\bC$, so on $\fm$, the character
$\chi$ of $\pi$ is the character of the complexification of $\pi$ and so
coincides with twice the character $\chi_\bC$ of $\pi_\bC$, which is thus
necessarily real-valued. Finally, suppose $\pi$ is of complex type.
If $a\in\fm$, then $\theta_*(\pi_\bC)(a) = \overline{\pi(a)}$, so we
see in particular that the characters of $\pi^c$ and of
$\theta_*(\pi^c)$ (on $\fm$) are complex conjugates of one another,
and add up to the character of $\pi_\bC\cong \pi^c\oplus
\theta_*(\pi^c)$. But complexification of an operator doesn't change
its trace, so $\pi_\bC$ and $\pi$ have the same character on $\fm$,
and the characters of $\pi^c$ and of
$\theta_*(\pi^c)$ add up to $\chi$ on $\fm$.
\end{proof}
\begin{remark}
One can also phrase the results of Theorem \ref{thm:AandAC} in a way
more familiar from group representation theory.  Let $A$ be a real
{\Ca} and let $\pi$ be an irreducible representation of $A_\bC$ 
on a complex Hilbert space $\cH_\bC$ such that the class of $\pi$
is fixed under the involution of Proposition \ref{prop:Ahatinv}.  Then
$\pi$ is associated to an irreducible representation of $A$ of
either real or quaternionic type.  To tell which, observe that one of
two possibilities holds.  The 
first possibility is there is an $A$-invariant real
structure on $\cH_\bC$, i.e., $\cH_\bC$ is the complexification of a
real Hilbert space $\cH$ which is invariant under $A$, in which case
$\pi$ is of real type.  This condition is equivalent to saying that
there is a conjugate-linear map $\varepsilon\co \cH_\bC\to \cH_\bC$
commuting with $A$ and with $\varepsilon^2=1$.  ($\cH$ is just the
$+1$-eigenspace of $\varepsilon$.)

The second possibility is that there is a conjugate-linear map
$\varepsilon\co \cH_\bC\break\to \cH_\bC$ commuting with $A$ and with
$\varepsilon^2=-1$.  In this case if we let $i$ act on $\cH_\bC$ via
the complex structure and let $j$ act by $\varepsilon$, then since
$\varepsilon$ is conjugate-linear, $i$ and $j$ anticommute, and so
we get an $A$-invariant
structure of a quaternionic vector space on $\cH_\bC$, whose dimension
over $\bH$ is half the complex dimension of $\cH_\bC$. In this case,
$\pi$ clearly has quaternionic type.  This point of view closely
follows the presentation in \cite[II, \S6]{MR1410059}.

The books \cite{MR0252560} and \cite{MR1410059} discuss the question
of how one can tell the type (real, complex, or quaternionic) of an
irreducible representation of a compact Lie group.  In this case, one
also has a criterion based on the value of the Frobenius-Schur indicator
$\int\chi(g^2)\,dg$, which
is $1$ for representations of real type, $0$ for representations of
complex type, and $-1$ for representations of quaternionic type.  But
since this criterion is based on tensor
products for representations for groups, it doesn't seem to generalize
to real {\Ca}s in general.
\end{remark}

\section{Real {\Ca}s of continuous trace}
\label{sec:realCT}

We return now to the structure theory of (separable, say) real {\Ca}s
of type I.
\begin{definition}
Let $A$ be a real {\Ca} with complexification $A_\bC$. We say $A$ has
\emph{continuous trace} if $A_\bC$ has continuous trace in the sense
of \cite[\S4.5]{MR0458185}, that is, if elements $a\in (A_\bC)_+$ for
which $\pi\mapsto \Tr\pi(a)$ is finite and continuous on
$\widehat{A_\bC}$ are dense in $(A_\bC)_+$.
\end{definition}
\begin{theorem}
\label{thm:CTideal}
Any non-zero postliminal real {\Ca} {\lp}this is equivalent to being type I,
even in the non-separable case --- see \cite[Ch.\ 6]{MR548006} or
\cite[\S4.6]{MR1490835}{\rp} has a non-zero ideal of continuous trace.
\end{theorem}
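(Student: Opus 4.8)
The plan is to deduce this from the known complex-coefficient statement by descent along the conjugation $\sigma$ on $A_\bC$ whose fixed-point algebra is $A$. Recall the dictionary: closed two-sided ideals of $A$ correspond, compatibly with inclusions, to $\sigma$-invariant closed two-sided ideals $K$ of $A_\bC$, via $K\mapsto K\cap A$ and $I\mapsto I_\bC=I+iI$; indeed $K=(K\cap A)_\bC$, since $x\in K$ forces $\tfrac12(x+\sigma(x))$ and $\tfrac1{2i}(x-\sigma(x))$ into $K\cap A$, so in particular $K\ne 0$ iff $K\cap A\ne 0$. By the definition of continuous trace for real algebras, an ideal $I$ of $A$ has continuous trace exactly when $I_\bC$ does. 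So it suffices to exhibit a non-zero $\sigma$-invariant ideal of $A_\bC$ with continuous trace.

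Now $A_\bC$ is postliminal (postliminality, i.e.\ type I, passes between a real {\Ca} and its complexification --- cf.\ the references in the statement) and non-zero, so by the classical complex version of the theorem, \cite[\S4.5]{MR0458185}, there is a non-zero closed ideal $J$ of $A_\bC$ of continuous trace. Then $\sigma(J)$ is again a closed two-sided ideal ($\sigma$ being a conjugate-linear $*$-automorphism of the ring $A_\bC$), and it too has continuous trace, since $\sigma|_J$ is a conjugate-linear $*$-isomorphism of $J$ onto $\sigma(J)$ and continuous trace is insensitive to such isomorphisms. The intersection $J\cap\sigma(J)$ is $\sigma$-invariant. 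If it is non-zero, we are done, because an ideal of the continuous-trace algebra $J$ again has continuous trace. If $J\cap\sigma(J)=0$, then $J\cdot\sigma(J)\subseteq J\cap\sigma(J)=0$, so the (automatically closed) sum $J+\sigma(J)$ is an internal $C^*$-direct sum $J\oplus\sigma(J)$; this is non-zero, $\sigma$-invariant (the summands being swapped by $\sigma$), and of continuous trace, a $C^*$-direct sum of continuous-trace algebras having continuous trace. Either way we obtain a non-zero $\sigma$-invariant continuous-trace ideal of $A_\bC$, and intersecting with $A$ completes the proof.

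The real-coefficient part is thus purely formal; the one point to watch is that an arbitrary continuous-trace ideal of $A_\bC$ need not be $\sigma$-stable, which is why the case distinction between $J\cap\sigma(J)$ and $J\oplus\sigma(J)$ is required. (One could instead invoke the $\sigma$-equivariance of Dixmier's canonical composition series, but the dichotomy above is more elementary.) The genuine difficulty lies entirely on the complex side --- producing a non-zero continuous-trace ideal inside a postliminal algebra --- and that is classical.
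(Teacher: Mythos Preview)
Your proof is correct and follows the same strategy as the paper: reduce to the complex case, then force $\sigma$-invariance of the resulting continuous-trace ideal. The only difference is in how the $\sigma$-invariant ideal is obtained. You split into cases, using $J\cap\sigma(J)$ when it is non-zero and the $C^*$-direct sum $J\oplus\sigma(J)$ otherwise. The paper instead takes $J+\sigma(J)$ in one stroke, observing that $\sigma$ carries continuous-trace elements to continuous-trace elements via the explicit identity $\Tr\pi(\sigma(a))=\Tr\theta_*(\pi)(a)$ (with $\theta_*$ the involution on $\widehat{A_\bC}$ from Proposition~\ref{prop:Ahatinv}); hence $J+\sigma(J)$ always has continuous trace and the dichotomy is unnecessary. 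Your argument is perfectly sound, just slightly less streamlined; the paper's version also makes the link to the involution on the spectrum explicit, which fits the surrounding narrative.
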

\begin{proof}
That $A_\bC$ has a non-zero ideal $I$ of continuous trace is \cite[Lemma
  4.4.4]{MR0458185}. So we need to show that $I$ can be chosen to be
$\sigma$-invariant, or equivalently, to show that $\widehat I$ can be
chosen invariant under the involution of Proposition
\ref{prop:Ahatinv}. Simply observe that $I+\sigma(I)$ is still a
closed two-sided ideal and is clearly $\sigma$-invariant.
Furthermore, it still has continuous trace since if $a\in I_+$ and
$t_a\co\pi\mapsto \Tr\pi(a)$ is finite and continuous, then
$\pi\mapsto \Tr\pi(\sigma(a))= \Tr\pi(\theta(a))=
{\Tr\theta_*(\pi)(a)}=t_a\circ\theta_*(\pi)$ 
is also finite and continuous, so that $\sigma(a)$ is also a
continuous-trace element.
\end{proof}
\begin{corollary}
\label{cor:CTcomp}
Any non-zero postliminal real {\Ca} has a composition series
{\lp}possibly transfinite{\rp} with subquotients of continuous trace.
\end{corollary}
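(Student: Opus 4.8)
The plan is to construct the series by transfinite recursion, feeding Theorem \ref{thm:CTideal} into itself. Set $I_0=0$. Suppose $I_\alpha$ has been defined and is a norm-closed two-sided ideal of $A$ with $I_\alpha\neq A$. A norm-closed two-sided ideal of a real {\Ca} is automatically self-adjoint (use a self-adjoint approximate unit, exactly as in the complex case), so $A/I_\alpha$ is again a real {\Ca}, and it is postliminal: its complexification is $(A/I_\alpha)_\bC\cong A_\bC/(I_\alpha)_\bC$, a quotient of the type I {\Ca} $A_\bC$, hence type I, so $A/I_\alpha$ is type I as well. By Theorem \ref{thm:CTideal} there is a non-zero ideal $J_\alpha$ of $A/I_\alpha$ of continuous trace; let $I_{\alpha+1}$ be its preimage under the quotient map $A\to A/I_\alpha$. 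Then $I_{\alpha+1}$ is a norm-closed two-sided ideal strictly containing $I_\alpha$, and $I_{\alpha+1}/I_\alpha\cong J_\alpha$ has continuous trace. At a limit ordinal $\lambda$ set $I_\lambda=\overline{\bigcup_{\alpha<\lambda}I_\alpha}$, which is again a norm-closed two-sided ideal of $A$.

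The next step is to see that the recursion terminates. As long as $I_\alpha\neq A$ the chain $(I_\alpha)$ is strictly increasing, so it cannot continue past an ordinal $\gamma$ whose cardinality exceeds that of the set of norm-closed two-sided ideals of $A$ (which is a set, bounded in cardinality by that of the power set of $A$). Hence there is a least ordinal $\gamma$ with $I_\gamma=A$. The chain $(I_\alpha)_{0\le\alpha\le\gamma}$ is then the required composition series: $I_0=0$, $I_\gamma=A$, it is continuous at limit ordinals by construction, and every successor subquotient $I_{\alpha+1}/I_\alpha$ has continuous trace. This is transfinite in general, but finite or countable when $A$ is separable.

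The only point needing any care — and it is routine — is the stability of postliminality under quotients of real {\Ca}s, which I reduce to the complex case via complexification (see \cite[Ch.\ 6]{MR548006} or \cite[\S4.6]{MR1490835}), together with the standard cardinality argument for termination of the transfinite induction. No genuinely new difficulty arises beyond Theorem \ref{thm:CTideal} itself.
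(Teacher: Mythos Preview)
Your proposal is correct and is precisely the standard transfinite induction argument the paper has in mind; the paper's proof simply says ``This follows by transfinite induction just as in the complex case,'' and you have written out those details faithfully, including the reduction of postliminality of the quotient to the complex case and the cardinality argument for termination.
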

\begin{proof}
This follows by transfinite induction just as in the complex case.
\end{proof}

Because of Theorem \ref{thm:CTideal} and Corollary \ref{cor:CTcomp},
it is reasonable to focus special attention on real {\Ca}s with
continuous trace. To such an algebra $A$ (which we will assume is
separable to avoid certain pathologies, such as the possibility that
the spectrum might not be paracompact) is associated a 
\emph{Real space} $(X,\iota)$ in the sense of Atiyah
\cite{MR0206940}, that is, a locally compact Hausdorff space
$X=\widehat{A_\bC}$ and an involution $\iota$ on $X$ defined
by Proposition \ref{prop:Ahatinv}. The problem then arises of
classifying all the real continuous-trace algebras associated to a
fixed Real space $(X,\iota)$.  There is always a unique such
commutative real $C^*$-algebra, given by Theorem \ref{thm:ArensKap}.

When one considers noncommutative algebras, $*$-isomorphism is
too fine for most purposes, and the most natural
equivalence relation turns out to be \emph{Morita equivalence},
which works for real {\Ca}s just as it does 
for complex {\Ca}s. Convenient references for the theory of Morita
equivalence (in the complex case) are
\cite{MR679708,MR1634408}.  A Morita equivalence between real {\Ca}s
$A$ and $B$ is given by an $A$-$B$ bimodule $X$ with $A$-valued
and $B$-valued inner products, satisfying a few simple axioms:
\begin{enumerate}
\item $\langle x, y\rangle_A z = x\langle y, z\rangle_B$ and
  $\langle a\cdot x, y\rangle_B = \langle x, a^*\cdot y\rangle_B$,
  $\langle x\cdot b, y\rangle_A = \langle x, y\cdot b^*\rangle_A$
  for $x,y,z\in   X$ and $a\in A$, $b\in B$.
\item The images of the inner products are dense in $A$ and in $B$.
\item $\Vert \langle x, x\rangle_A\Vert_A^{1/2} = \Vert \langle x,
  x\rangle_B\Vert_B^{1/2} $ is a norm on $X$, $X$ is complete for this
  norm, and $A$ and $B$ act continuously on $X$ by bounded operators.
\end{enumerate}

The real continuous-trace algebras with spectrum $(X,\iota)$
have been completely classified by Moutuou \cite{MR3158706} 
up to spectrum-fixing Morita equivalence, at least in the separable
case. (Actually Moutuou worked with graded {\Ca}s.
See also \cite[\S3.3]{MR3316647} for a translation into the
ungraded case and the language we use here.)

First let us define the fundamental invariants.
\begin{definition}
\label{def:sign}
Let $A$ be a real continuous-trace algebra with spectrum
$(X,\iota)$. In other words $X=\widehat{A_\bC}$, which is Hausdorff
since $A$ has continuous trace, and let $\iota$ be the involution on
$X$ defined by Proposition \ref{prop:Ahatinv}. The \emph{sign choice}
of $A$ is the map $\alpha\co X^\iota\to \{+,-\}$ attaching a $+$ sign
to fixed points of real type and a $-$ sign to fixed points of
quaternionic type. (Of course, $\iota$ acts freely on $X\smallsetminus
X^\iota$, and the orbits of this action correspond to the pairs of conjugate
representations of complex type.)

Note that if we give $\{+,-\}$ the discrete topology, then it is easy
to see that $\alpha$ is continuous\footnote{One way to see this is to
  apply the part of Theorem \ref{thm:AandAC} about characters. If
  $e\in A_+$ is a local minimal projection near $x\in X$, then $\Tr
  \pi(e) =1$ if $\pi$ is close to $x$ and $\alpha(x)=+$ and $\Tr
  \pi(e) =2$ if $\pi$ is close to $x$ and $\alpha(x)=-$.}, so it is
constant on each connected component of $X^\iota$.

Incidentally, the name \emph{sign choice} for this invariant comes
from a physical application we will see in Section \ref{sec:KR}, where
it is related to the signs of O-planes in string theory.
\end{definition}

The other invariant of a (separable) real continu\-ous-trace algebra is the
\emph{Dixmier-Douady invariant}.  For a \emph{complex} continu\-ous-trace
algebra with spectrum $X$, this is a class in $H^2(X, \cT)$ (sheaf
cohomology), where $\cT$ is the sheaf of germs of continuous
$\bT$-valued functions on $X$.  We have a  short exact sequence of
sheaves
\begin{equation}
  0\to \bZ \to \cR \to \cT \to 1,
  \label{eq:shvs}
\end{equation}
where $\cR$ is the sheaf of germs of continuous real-valued functions,
coming from the short exact sequence of abelian groups
\[
0\to \bZ \to \bR \to \bT\to 1.
\]
Since $\cR$ is a fine sheaf and thus has no higher cohomology, the
long exact sequence in sheaf cohomology coming from \eqref{eq:shvs}
gives $H^2(X, \cT)\cong H^3(X, \bZ)$, and indeed, the Dixmier-Douady
invariant is usually presented as a class in $H^3$.

However, for purposes of dealing with \emph{real} continuous-trace
algebras, we need to take the involution $\iota$ on $X$ (and on $\cT$) into
account. This will have the effect of giving a Dixmier-Douady
invariant in an \emph{equivariant} cohomology group $H^2_\iota(X,
\cT)$ defined by Moutuou \cite{MR3141812}, who denotes it $HR^2(X,
\cT)$ (with the 
$\iota$ understood).  The $HR^\bullet$ groups are similar to, but not 
identical with, the $\bZ/2$-equivariant cohomology groups
$H^\bullet(X; \bZ/2, \cF)$ as defined in 
Grothendieck's famous paper \cite[Ch.\ V]{MR0102537}. The precise
relationship in our situation is as follows:
\begin{theorem}
\label{thm:MoutvsGroth}
Let $(X,\iota)$ be a second-countable locally compact Real space,
i.e., space with an involution, and let $\pi\co X\to Y$ be the
quotient map to $Y=X/\iota$. Then if $\cT$ is the sheaf of germs
of $\bT$-valued continuous functions on $X$, equipped with the
involution induced by the involution $(x,z)\mapsto
(\iota(x),\overline{z})$ on $X\times \bT$, then Moutuou's $HR^2(X,
\cT)$ coincides with $H^2(Y, \cT^\iota)$, where 
$\cT^\iota$ is the induced sheaf on $Y$, i.e., the sheafification of
the presheaf $U\mapsto C(\pi^{-1}(U), \bT)^\iota$.  By
\cite[(5.2.6)]{MR0102537}, there is an edge homomorphism
$H^2_\iota(X, \cT)\to H^2(X; \bZ/2, \cT)$ {\lp}which is not
necessarily an isomorphism{\rp}.
\end{theorem}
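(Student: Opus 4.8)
The plan is to prove the statement in two stages. First I would identify Moutuou's group $HR^2(X,\cT)=H^2_\iota(X,\cT)$ with the ordinary sheaf cohomology $H^2(Y,\cT^\iota)$ of the quotient $Y=X/\iota$; this is the substantive point, and is in essence a descent statement along the quotient map $\pi$. Second, I would recognize the asserted map $H^2_\iota(X,\cT)\to H^2(X;\bZ/2,\cT)$ as the $E_2^{2,0}$ edge homomorphism of the Grothendieck spectral sequence for $\pi$, once $\cT^\iota$ has been identified with the invariant direct image $\pi^{\bZ/2}_*\cT$.

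For the first stage I would begin by recording that, since $(X,\iota)$ is second countable and locally compact, both $X$ and $Y$ are $\sigma$-compact Hausdorff, hence paracompact, so that \v{C}ech cohomology relative to open covers computes sheaf cohomology on each; also $\pi$ is open, closed, proper, and at most two-to-one. Moutuou's $HR^\bullet(X,\cT)$ is, by its construction \cite{MR3141812}---translated to the ungraded language as in \cite[\S3.3]{MR3316647}---a colimit of \v{C}ech-type complexes taken over $\iota$-invariant open covers of $X$, and one may restrict this colimit to the preimages $\pi^{-1}(\mathfrak V)$ of open covers $\mathfrak V=\{V_a\}$ of $Y$. For such a cover one has $\pi^{-1}(V_{a_0}\cap\cdots\cap V_{a_p})=\pi^{-1}(V_{a_0})\cap\cdots\cap\pi^{-1}(V_{a_p})$, and therefore $\cT\bigl(\pi^{-1}(V_{a_0\cdots a_p})\bigr)^\iota=\cT^\iota(V_{a_0\cdots a_p})$, so the $HR$-cochain complex of $\pi^{-1}(\mathfrak V)$ with values in $\cT$ is canonically isomorphic to the ordinary \v{C}ech complex of $\mathfrak V$ with values in $\cT^\iota$, compatibly with the differentials and with refinement. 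Passing to the colimit over the open covers $\mathfrak V$ of $Y$---which are visibly cofinal among all open covers of $Y$---then yields $HR^\bullet(X,\cT)\cong H^\bullet(Y,\cT^\iota)$, and in particular the isomorphism asserted in degree $2$.

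For the second stage, note first that $\cT^\iota=\pi^{\bZ/2}_*\cT$ directly from the definitions: for open $V\subseteq Y$ one has $(\pi_*\cT)(V)^{\bZ/2}=\cT(\pi^{-1}V)^{\bZ/2}=C(\pi^{-1}V,\bT)^\iota$, and the $\bZ/2$-invariants of a sheaf of abelian groups equipped with a $\bZ/2$-action again form a sheaf, so that no sheafification is in fact necessary. Since $\bZ/2$ is finite it acts properly, and $\pi^{\bZ/2}_*$ carries injectives to injectives (it has an exact left adjoint), so Grothendieck's spectral sequence for the quotient \cite[Ch.\ V]{MR0102537} applies and reads $E_2^{p,q}=H^p(Y,R^q\pi^{\bZ/2}_*\cT)\Rightarrow H^{p+q}(X;\bZ/2,\cT)$; its edge homomorphism \cite[(5.2.6)]{MR0102537} is the map $H^2(Y,\pi^{\bZ/2}_*\cT)=E_2^{2,0}\to H^2(X;\bZ/2,\cT)$, which, composed with the isomorphism of the first stage, is the map in the statement. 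To see that it is not an isomorphism in general, I would compute the higher direct images: because $\pi$ is finite, $R^q\pi^{\bZ/2}_*\cT$ has stalk $H^q(\bZ/2,\cT_x)$ at $\bar x\in Y$, where $\cT_x$ is the stalk of $\cT$ with the involution induced by $\iota$ and complex conjugation. Away from the fixed set $X^\iota$ this vanishes for $q>0$, since there the stalk of $\pi_*\cT$ is an induced $\bZ/2$-module; at a fixed point $x=\iota(x)$ the exponential sequence $0\to\bZ\to\cR\to\cT\to 0$ reduces the computation to the Tate cohomology of the sign representation $\bZ^{\mathrm{sgn}}$ of $\bZ/2$, and yields $H^q(\bZ/2,\cT_x)=0$ for $q$ odd while $H^q(\bZ/2,\cT_x)\cong\bZ/2$ for $q$ even and positive. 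Thus $R^1\pi^{\bZ/2}_*\cT=0$---so the edge homomorphism is in fact always injective---but $R^2\pi^{\bZ/2}_*\cT$ is the constant sheaf $\bZ/2$ on the image of $X^\iota$, hence nonzero whenever $X^\iota\neq\emptyset$; then $E_2^{0,2}\neq 0$, and (for instance when $H^3(Y,\cT^\iota)=0$) the edge map fails to be surjective.

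I expect the one genuine obstacle to be the comparison carried out in the first stage: Moutuou's definition is framed for general Real groupoids and in a graded setting, so one has to verify with some care that, for the transformation groupoid $X\rtimes\bZ/2$ and in the ungraded picture, it reduces to the plain invariant-\v{C}ech complex used above---equivalently, that the $HR$-complex carries no nontrivial ``group-cohomology direction.'' That is precisely the feature distinguishing $HR^\bullet$ from $H^\bullet(X;\bZ/2,-)$, and it is exactly what makes the edge homomorphism genuinely lossy along the fixed set.
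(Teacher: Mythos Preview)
Your approach is essentially the same as the paper's: reduce Moutuou's \v Cech-style definition to ordinary sheaf cohomology on the quotient $Y$ using paracompactness of $X$ and $Y$, and then read off the comparison with Grothendieck's $H^\bullet(X;\bZ/2,\cT)$ from the spectral sequence of \cite[Ch.\ V]{MR0102537}. The paper's proof is in fact little more than a pointer to the equivariant \v Cech--sheaf comparison and to Godement, so your write-up is a considerably more explicit version of the same argument.

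Two remarks. First, your cofinality step is cleaner than you make it sound: an open cover of $X$ by $\iota$-stable sets $\{U_j\}$ satisfies $\pi^{-1}(\pi(U_j))=U_j$, so such covers are \emph{exactly} the pullbacks of open covers of $Y$; the only thing to check is that Moutuou's colimit really runs over covers of this type, which is the point you correctly flag at the end. Second, your computation of $R^q\pi^{\bZ/2}_*\cT$ (vanishing for $q$ odd, $(\bZ/2)_{X^\iota}$ for $q>0$ even) and the resulting injectivity of the edge homomorphism go beyond what the paper proves here, but they are exactly what the paper uses later in the analysis of the spectral sequence $\mathrm{I}^{\bullet,\bullet}$ in Theorem~\ref{thm:DDK3}; so this extra work is not wasted.
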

\begin{proof}
  In order to deal with quite general topological groupoids,
  Moutuou's definition of $H^\bullet_\iota(X, \cF)$ in
  \cite{MR3141812} uses
  simplicial spaces and a \v Cech construction.  But in our situation,
  $X$ and $Y$ are paracompact and the groupoid structure on $X$ is
  trivial, so by the equivariant analogue of \cite[Theorem
    1.1]{MR2231869} and the isomorphism between \v Cech cohomology and
  sheaf cohomology for paracompact spaces \cite[Th\'eor\`eme
    II.5.10.1]{MR0345092}, it reduces here to ordinary sheaf
  cohomology. 
\end{proof}

Grothendieck's equivariant cohomology groups
$H^\bullet(X; \bZ/2, \cF)$
are the derived functors of the equivariant section 
functor $X\mapsto \Gamma(X,\cF)^\iota$. Moutuou's groups are 
generally smaller. A few examples will clarify the notion, and also
explain the difference between Grothendieck's  functor and Moutuou's.
\begin{enumerate}
\item If $\iota$ is trivial on $X$, the involution on $\cT$ is just
  complex conjugation, and $H^\bullet_\iota(X,\cT)$ can be
  identified with $H^\bullet(X,\cT^\iota)=H^\bullet(X,\bZ/2)$.  Note,
  for example, that if
  $X$ is a single point, then Grothendieck's $H^\bullet(X; \bZ/2,
  \cT)$ would be the group cohomology $H^\bullet(\bZ/2, \bT)$, which is
  $\bZ/2$ in every even degree, whereas Moutuou's
  $H^\bullet_\iota(\pt, \cT)$ is just 
  $H^\bullet(\pt, \bT^\iota) = \bZ/2$, concentrated in degree $0$.
\item If $\iota$ acts freely, so that $\pi\co X\to Y$ is a $2$-to-$1$
  covering map, $H^\bullet_\iota(X,\cT)$ can be identified with Grothendieck's
  \[ H^{\bullet+1}(X; \bZ/2, \bZ)\cong
  H^{\bullet+1}(Y,\ubZ)\] for $\bullet>0$, via the
  equivariant version of the long exact sequence associated to
  \eqref{eq:shvs} and \cite[Corollaire 3, p.\ 205]{MR0102537}.
  Here   $\ubZ$ is a locally constant sheaf
  obtained by dividing   $X\times \bZ$ by the involution sending
  $(x,n)$ to $(\iota x, -n)$. 
\end{enumerate}
\begin{definition}
\label{def:DDinv}
Now we can explain the definition of the real Dixmier-Douady
invariant of a separable real {\Ca} $A$.
Without loss of generality, we can tensor $A$ with
$\cK_\bR$, which doesn't change the algebra up to spectrum-fixing
Morita equivalence.  Then $A_\bC$ becomes stable, and is locally, but not
necessarily globally, isomorphic to $C_0(X, \cK)$.  By paracompactness
(here we use separability of $A$, which implies $X$ is second countable and
thus paracompact), there is a locally finite covering $\{U_j\}$ of $X$
such that $A_\bC$ is trivial over each $\{U_j\}$. We can also assume
each $U_j$ is $\iota$-stable.  The trivializations of $A_\bC$ over
the $U_j$ give a \v Cech cocycle in $H^1(\{U_j\}, \mathcal{PU})$,
given by the ``patching data'' over overlaps $U_j\cap U_k$.  Here
$\mathcal{PU}$ is the sheaf of germs of $PU$-valued continuous
functions, since $PU$ is the automorphism group of $\cK$.
The image of this class in $H^1(X,\mathcal{PU})\cong H^2(X,\cT)$ is
the complex Dixmier-Douady invariant. Here we use the long exact
cohomology sequence associated to the sequence of sheaves
\begin{equation}
  1\to \cT \to \mathcal{U} \to \mathcal{PU}\to 1,
  \label{eq:shvs1}
\end{equation}
where again the middle sheaf is fine since the infinite unitary group
(with the strong or weak operator topology) is contractible.

In our situation, there is a little more structure because $A_\bC$ was
obtained by complexifying $A$. So we have the conjugation $\sigma$ on
$A_\bC$, which induces the involution $\iota$ on $X$ and on the sheaves
$\mathcal{U}$, $\mathcal{PU}$, and $\cT$ over $X$. Furthermore, the
cocycle of the patching data must be $\iota$-equivariant, and so
defines the real Dixmier-Douady invariant, which is its coboundary in
$H^2_\iota(X,\cT)$.
\end{definition}
\begin{theorem}[{Moutuou \cite{MR3158706}}]
\label{thm:Mout}  
The spectrum-fixing Morita equivalence classes of real
continuous-trace algebras over $(X,\iota)$ form a group 
{\lp}where the group operation comes from tensor product over $X${\rp}
which is isomorphic to $H^0(X^\iota, \bZ/2) \oplus H^2_\iota(X,\cT)$
via the map sending an algebra $A$ to the pair consisting of its sign
choice and real Dixmier-Douady invariant {\lp}in the sense
of Definitions \ref{def:sign} and \ref{def:DDinv}{\rp}.
\end{theorem}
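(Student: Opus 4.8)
The plan is to run the classical classification of \emph{complex} continuous-trace algebras by their Dixmier--Douady invariant --- as recalled just before Definition~\ref{def:DDinv} --- and to carry the involution $\iota$ along at every step, isolating the sign choice as the single new feature, which lives only over $X^\iota$. As in Definition~\ref{def:DDinv} we may tensor with $\cK_\bR$ without affecting the spectrum-fixing Morita equivalence class, so we work with \emph{stable} algebras, for which spectrum-fixing Morita equivalence is the same as $C_0(X,\iota)$-linear $*$-isomorphism, where $C_0(X,\iota)$ is the commutative algebra of Theorem~\ref{thm:ArensKap}. The group operation is the balanced tensor product $A\otimes_{C_0(X,\iota)}B$ (its complexification is $A_\bC\otimes_X B_\bC$ with the conjugation $\sigma_A\otimes\sigma_B$, still inducing $\iota$ on $X$); one checks it is again a real continuous-trace algebra with spectrum $(X,\iota)$, is associative and commutative up to isomorphism, descends to Morita classes, and has $C_0(X,\iota)\otimes\cK_\bR$ as identity. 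Inverses are produced exactly as in the complex case: the conjugate algebra $\bar A$ has spectrum $(X,\iota)$, and $A$, equipped with the $C_0(X,\iota)$-valued trace furnished by its continuous-trace structure, is an imprimitivity bimodule implementing $A\otimes_{C_0(X,\iota)}\bar A\sim C_0(X,\iota)$. So the Morita classes form an abelian group.

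That $\Phi\co A\mapsto(\text{sign choice},\text{real Dixmier--Douady invariant})$ is a group homomorphism into $H^0(X^\iota,\bZ/2)\oplus H^2_\iota(X,\cT)$ requires two checks. First, both invariants are Morita invariants: for the sign choice this is immediate from Theorem~\ref{thm:AandAC} and the footnote to Definition~\ref{def:sign}, since a spectrum-fixing Morita equivalence preserves the commutant, equivalently the trace of a local minimal projection, of each irreducible representation; for the real Dixmier--Douady invariant it is built into the cocycle definition. Second, additivity amounts to the fact that the $\iota$-equivariant patching cocycles multiply (so the real Dixmier--Douady classes add in $H^2_\iota(X,\cT)$) together with the fibrewise identities $\cK_\bR\otimes_\bR\cK_\bR\cong\cK_\bR$, $\cK_\bR\otimes_\bR\cK_\bH\cong\cK_\bH$, $\cK_\bH\otimes_\bR\cK_\bH\cong\cK_\bR$, which is exactly the group law on the sign factor $H^0(X^\iota,\bZ/2)$.

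For injectivity, suppose a stable $A$ has trivial sign choice and trivial real Dixmier--Douady invariant. Vanishing of the underlying \emph{complex} class yields a $C_0(X)$-linear isomorphism $\phi\co A_\bC\to C_0(X,\cK)$; comparing the conjugation $\sigma$ on $A_\bC$ with the standard conjugation $\sigma_0$ on $C_0(X,\cK)$ produces an $\iota$-twisted cocycle $\phi\circ\sigma\circ\phi^{-1}\circ\sigma_0^{-1}$ with values in $\mathcal{PU}$. Through the $\iota$-equivariant analogue of the exact sequence~\eqref{eq:shvs1}, the obstruction to adjusting $\phi$ to be $\sigma$-equivariant is precisely the pair consisting of the real Dixmier--Douady invariant in $H^2_\iota(X,\cT)$ and the class in $H^0(X^\iota,\bZ/2)$ recording, over each component of $X^\iota$, which of the two real forms $\cK_\bR$, $\cK_\bH$ of the fibre occurs, i.e.\ the sign choice. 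Since both vanish, $\phi$ may be taken equivariant and $A$ lies in the identity class. For surjectivity, given a locally constant $\alpha\co X^\iota\to\{+,-\}$ and $\delta\in H^2_\iota(X,\cT)$, one builds a real continuous-trace algebra $A_\alpha$ with sign choice $\alpha$ and trivial invariant by working over small $\iota$-stable neighbourhoods --- over which one is free to choose $\cK_\bR$- or $\cK_\bH$-fibres (both having complexification $\cK_\bC$, and over $\iota$-free neighbourhoods being mutually isomorphic) --- and gluing; then one represents $\delta$ by an $\iota$-equivariant \v Cech $2$-cocycle on an $\iota$-stable locally finite cover, twists the trivial pieces $C_0(U_j)\otimes\cK_\bC$ accordingly as in the classical Dixmier--Douady construction, and tensors the result with $A_\alpha$ over $C_0(X,\iota)$. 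Throughout, Theorem~\ref{thm:MoutvsGroth} and paracompactness of $X$ (from separability of $A$) let one replace Moutuou's simplicial $HR^\bullet$ by ordinary sheaf cohomology on $Y=X/\iota$, so these long exact sequences are the familiar ones.

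The main obstacle is the input common to the injectivity and surjectivity arguments: making precise the $\iota$-equivariant long exact cohomology sequence attached to $1\to\cT\to\mathcal{U}\to\mathcal{PU}\to 1$. The delicate feature is the behaviour of the involution on the nonabelian sheaf $\mathcal{PU}$ over the fixed set: its ``fixed part'' over $X^\iota$ is disconnected, its two components being the automorphism groups of $\cK_\bR$ and of $\cK_\bH$, and it is exactly this that forces the invariant to be a \emph{pair} --- producing the summand $H^0(X^\iota,\bZ/2)$ --- rather than a single class, and that pins the connecting homomorphism to $H^2_\iota(X,\cT)$. Identifying the precise cohomology theory in which this sequence is exact (Moutuou's $HR^\bullet$, reduced to sheaf cohomology on the quotient by Theorem~\ref{thm:MoutvsGroth} in the paracompact case) and checking its naturality under $\otimes_{C_0(X,\iota)}$ is the technical core; the remaining verifications are routine, if lengthy, adaptations of the complex theory.
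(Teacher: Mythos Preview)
The paper does not supply its own proof of this theorem: it is stated as Moutuou's result, and the only thing following it is a remark pointing out that Moutuou's original formulation looks rather different and referring to \cite[\S3.3]{MR3316647} for the translation into the present language. So there is no ``paper's proof'' to compare your sketch against.

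That said, your outline is a reasonable blueprint for how one would prove such a statement from scratch, and it is in the spirit of the classical Dixmier--Douady argument that the paper recalls just before Definition~\ref{def:DDinv}. You correctly set up the group structure via balanced tensor product over $C_0(X,\iota)$, identify the fibrewise identities $\cK_\bR\otimes\cK_\bH\cong\cK_\bH$ and $\cK_\bH\otimes\cK_\bH\cong\cK_\bR$ as the source of additivity of the sign choice, and you are honest that the crux is the $\iota$-equivariant analysis of the nonabelian sequence $1\to\cT\to\mathcal U\to\mathcal{PU}\to1$, where the disconnectedness of the ``fixed'' part of $\mathcal{PU}$ over $X^\iota$ accounts for the extra $H^0(X^\iota,\bZ/2)$ summand. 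What remains genuinely unproven in your sketch is precisely this: the assertion that the obstruction to making the trivialization $\phi$ equivariant is \emph{exactly} the pair $(\alpha,\delta)$, with no cross-terms and with the two invariants decoupling as a direct sum rather than sitting in an extension. Establishing that cleanly is the substance of Moutuou's work (carried out in the more general graded/groupoid setting), and it is not something one can wave through; your last paragraph rightly flags it as the technical core rather than a routine verification.
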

\begin{remark}
The formulation of this theorem in \cite{MR3158706} looks rather
different, for a number of reasons, though it is actually more general.
For an explanation of how to
translate it into this form, see \cite[\S3.3]{MR3316647}.
\end{remark}
\begin{example}
\label{ex:K3}
Here are three examples, that might be relevant for physical applications,
that show how one computes the Brauer group of Theorem \ref{thm:Mout}
in practice. In all cases we will take $X$ to be a K3-surface
(a smooth simply connected complex projective algebraic surface with trivial
canonical bundle) and the involution $\iota$ to be holomorphic (algebraic).
\begin{enumerate}
  \item Suppose the involution $\iota$ is holomorphic and free. In
    this case the quotient $Y=X/\iota$ is an Enriques surface (with
    fundamental group $\bZ/2$) and $\iota$ reverses the sign of a
    global holomorphic volume form. (See for example \cite[\S1]{MR2137825}.)
    There is no sign choice invariant
    since the involution is free, and thus all representations must be
    of complex type. The Dixmier-Douady invariant lives in (twisted)
    $3$-cohomology of the quotient space $Y$. By Poincar\'e duality,
    $H^3(Y,\ubZ)\cong H_1(Y,\ubZ)$, but since $X$ is $1$-connected,
    the classifying map $Y\to B\bZ/2=\bR\bP^\infty$ is a
    $2$-equivalence (an isomorphism on $\pi_1$ and surjection on $\pi_2$)
    and induces an isomorphism on twisted $H_1$. So $H^3(Y,\ubZ)\cong
    H_1(Y,\ubZ)     \cong H_1(B\bZ/2, \ubZ) \cong
    H_1^{\text{group}}(\bZ/2, \ubZ)=0 $. So the Dixmier-Douady
    invariant is always trivial in this case.
  \item If $\iota$ is a so-called Nikulin involution (see
    \cite{MR728142,MR2274533}), then $X^\iota$  consists of $8$
    isolated fixed points. Let $Z = (X\smallsetminus X^\iota)/\iota$.
    By transversality, the complement $X\smallsetminus X^\iota$ of the
    fixed-point set is still simply connected, so $\pi_1(Z)\cong
    \bZ/2$ and the map $Z\to B\bZ/2$ is a $2$-equivalence.  We have
    $H^2_{\iota,c}(X\smallsetminus X^\iota, \cT) \cong H^3_c(Z,
    \ubZ)$, and by Poincar\'e duality, $H^3_c(Z,\ubZ)\cong H_1(Z,\ubZ)
      \cong H_1(B\bZ/2, \ubZ) = 0$. From the long exact sequence
\begin{multline}
\label{eq:sheaffixedpts1}
H^1(X^\iota, \bZ/2)=0 \to H^2_{\iota,c}(X\smallsetminus X^\iota, \cT)
\\ \to H^2_\iota(X,\cT) \to H^2(X^\iota, \cT) = 0,
\end{multline}
    (see \cite[Th\'eor\`eme II.4.10.1]{MR0345092})
    we see that $H^2_\iota(X,\cT)=0$ and the Dixmier-Douady invariant
    is always trivial in this case. However, there are many
    possibilities for the sign choice since $H^0(X^\iota, \bZ/2) \cong
    (\bZ/2)^8$. 
  \item It is well known that there are K3-surfaces $X$
    with a holomorphic map $f\co X\to \bC\bP^2$ that is a two-to-one
    covering branched over a curve $C\subset \bC\bP^2$ of degree $6$
    and genus $10$. Such a surface $X$ admits a holomorphic involution
    $\iota$ having $C$ as fixed-point set.  We want to compute the
    Brauer group of real continuous-trace algebras over $(X,\iota)$.
    Since $X^\iota = C$ is connected, there are only two possible sign
    choices, and algebras with sign choice $-$ are obtained from those
    with sign choice $+$ simply by tensoring with $\bH$. So we may
    assume the sign choice on the fixed set is a $+$. The calculation
    of the possible Dixmier-Douady invariants is complicated and uses
    Theorem \ref{thm:MoutvsGroth}.
\end{enumerate}
\end{example}
\begin{theorem}
\label{thm:DDK3}
Let $X$ be a K3-surface and $\iota$ a holomorphic involution on $X$ with
fixed set $X^\iota=C$ a smooth projective complex curve of genus
$10$ and with quotient space $Y=X/\iota=\bC\bP^2$.  Then $H^2_\iota(X,\cT)=0$.
\end{theorem}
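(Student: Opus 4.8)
The plan is to push everything down to the quotient $Y=X/\iota=\bC\bP^2$. By Theorem~\ref{thm:MoutvsGroth}, $H^2_\iota(X,\cT)\cong H^2(Y,\cT^\iota)$. As recalled in Example~\ref{ex:K3}(3), $\pi\co X\to Y$ is the double cover branched along a smooth plane sextic $C'\subset\bC\bP^2$, namely the image of $C=X^\iota$, so $C'\cong C$ has genus $10$. Write $i\co C'\hookrightarrow Y$ for the closed inclusion and $j\co Z=Y\smallsetminus C'\hookrightarrow Y$ for the open one. Over $Z$ the map $\pi$ restricts to the connected double cover $U=X\smallsetminus C\to Z$, which corresponds to the unique index-two subgroup of $\pi_1(Z)$; since $\pi_1$ of the complement of a smooth plane curve of degree $d$ is $\bZ/d$, we have $\pi_1(Z)=\bZ/6$, and $U\to Z$ is classified by the surjection $w\co\pi_1(Z)\to\bZ/2$ sending a meridian of $C'$ to $-1$. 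Let $\ubZ$ denote the associated rank-one local system on $Z$.

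Next I write down the real exponential sequence on $Y$. Let $\cR^\iota$ be the sheaf whose sections over $V\subset Y$ are the continuous real functions $g$ on $\pi^{-1}(V)$ with $g\circ\iota=-g$; it is a module over the soft sheaf $\cR$ of germs of continuous real functions on $Y$ (which acts via pullback along $\pi$), hence $\cR^\iota$ is itself soft. The map $g\mapsto e^{ig}$ has kernel the sheaf $\underline{\bZ}^\iota$ of locally constant $\iota$-anti-invariant integer functions. A local computation near a point of $C'$ yields two facts: (a) $\underline{\bZ}^\iota$ has vanishing stalks along $C'$, so that $\underline{\bZ}^\iota=j_!\ubZ$; and (b) $g\mapsto e^{ig}$ is \emph{not} surjective onto $\cT^\iota$ along $C'$ — for a germ $f$ with $f\circ\iota=\bar f$ the obstruction to a lift is the class mod $2$ of the integer $n$ determined by $g_0\circ\iota+g_0\equiv2\pi n$ for a local logarithm $g_0$ of $f$, and this is nonzero on suitable germs supported in a transverse slice. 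Hence one obtains a four-term exact sequence of sheaves on $Y$,
\[
0\longrightarrow j_!\ubZ\longrightarrow\cR^\iota\xrightarrow{\ g\mapsto e^{ig}\ }\cT^\iota\xrightarrow{\ f\mapsto f|_{C'}\ }i_*\underline{\bZ/2}_{C'}\longrightarrow 0.
\]
Splicing at the image $\cK$ of the exponential and using softness of $\cR^\iota$ gives $H^k(Y,\cK)\cong H^{k+1}(Y,j_!\ubZ)\cong H^{k+1}_c(Z,\ubZ)$ for $k\ge1$; feeding this into the long exact sequence of $0\to\cK\to\cT^\iota\to i_*\underline{\bZ/2}_{C'}\to0$ produces the exact segment
\[
H^3_c(Z,\ubZ)\longrightarrow H^2(Y,\cT^\iota)\longrightarrow H^2(C',\bZ/2)\xrightarrow{\ \partial\ }H^4_c(Z,\ubZ),
\]
in which $\partial$ is, up to the above identifications, the connecting homomorphism of the four-term sequence.

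It remains to show (i) $H^3_c(Z,\ubZ)=0$ and (ii) $\partial$ is injective. For (i): $C'$ is ample, so $Z$ is a smooth affine surface and hence has the homotopy type of a CW complex of real dimension $\le2$, whence $H^{\ge3}(Z,L)=0$ for every local system $L$. The Gysin-type triangle $j_!\ubZ\to Rj_*\ubZ\to i_*\underline{\bZ/2}_{C'}[-1]$ — here $R^1j_*\ubZ$ is the monodromy-coinvariant sheaf $\underline{\bZ/2}_{C'}$ — then presents $H^3_c(Z,\ubZ)$ as a quotient of $H^1(C',\bZ/2)\cong(\bZ/2)^{20}$, so it is an elementary abelian $2$-group. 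On the other hand Poincar\'e duality gives $H^3_c(Z,\ubZ)\cong H_1(Z,\ubZ)$ and $H^4_c(Z,\ubZ)\cong H_0(Z,\ubZ)\cong\bZ/2$ (the coinvariants of $\bZ$ under $w$), and the universal coefficient theorem — applied to a finite free model of the chain complex of $Z$ with $\ubZ$-coefficients — gives $H_1(Z,\bZ/2)\cong H_1(Z,\ubZ)\otimes\bZ/2\ \oplus\ \operatorname{Tor}(H_0(Z,\ubZ),\bZ/2)$. As $H_1(Z,\bZ/2)\cong\bZ/2$ (because $\pi_1(Z)=\bZ/6$) and the $\operatorname{Tor}$ summand is already $\bZ/2$, we get $H_1(Z,\ubZ)\otimes\bZ/2=0$, which together with the elementary-$2$-group property forces $H_1(Z,\ubZ)=0$. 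For (ii): by $(i_*,i^!)$-adjunction the class of the four-term sequence lies in $\Ext^2_Y(i_*\underline{\bZ/2}_{C'},j_!\ubZ)\cong\Ext^2_{C'}(\underline{\bZ/2},i^!j_!\ubZ)$, and since $i^!j_!\ubZ\simeq\underline{\bZ/2}_{C'}[-2]$ this group is $\Hom_{C'}(\underline{\bZ/2},\underline{\bZ/2})=\bZ/2$, with our class equal to its generator by (b). Therefore $\partial$ is the composite of the canonical isomorphism $H^2(C',\bZ/2)\xrightarrow{\sim}H^4_{C'}(Y,j_!\ubZ)$ with the forget-supports map $H^4_{C'}(Y,j_!\ubZ)\to H^4(Y,j_!\ubZ)=H^4_c(Z,\ubZ)$, and the latter is an isomorphism because $H^3(Z,\ubZ)=H^4(Z,\ubZ)=0$. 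So $\partial$ is an isomorphism $\bZ/2\to\bZ/2$, and consequently $H^2_\iota(X,\cT)=H^2(Y,\cT^\iota)=\ker\partial=0$.

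I expect (ii) to be the main obstacle: establishing the four-term exact sequence — especially the local statement (b) — and recognizing the resulting $2$-extension as the generator of $\Ext^2_Y(i_*\underline{\bZ/2}_{C'},j_!\ubZ)\cong\bZ/2$, i.e.\ understanding precisely how $\cT^\iota$ degenerates along the branch curve, is the one genuinely geometric input; once $\pi_1(\bC\bP^2\smallsetminus C')=\bZ/6$ is in hand, the rest is bookkeeping with long exact sequences, Poincar\'e duality, and universal coefficients.
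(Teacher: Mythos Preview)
Your overall architecture is sound and, up through the exact segment
\[
H^3_c(Z,\ubZ)\longrightarrow H^2(Y,\cT^\iota)\longrightarrow H^2(C',\bZ/2)\xrightarrow{\ \partial\ }H^4_c(Z,\ubZ),
\]
it coincides with the paper's sequence \eqref{eq:sheaffixedpts}. Your computation of $H^3_c(Z,\ubZ)=0$ via the universal coefficient theorem is a pleasant alternative to the paper's appeal to the group homology of $\bZ/6$ with $\ubZ$-coefficients. So both arguments reduce to the same dichotomy: is $\partial\co\bZ/2\to\bZ/2$ zero or an isomorphism?

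Here you and the paper diverge. The paper resolves the dichotomy by comparing the two Grothendieck spectral sequences $\mathrm{I}^{p,q}_2=H^p(Y,H^q(\bZ/2,\cT))$ and $\mathrm{II}^{p,q}_2=H^p(\bZ/2,H^q(X,\cT))$, both converging to $H^{p+q}(X;\bZ/2,\cT)$; bounding the abutment via $\mathrm{II}$ requires the explicit $\bZ/2$-module structure on $H^2(X,\bZ)\cong E_8^{\oplus2}\oplus H^{\oplus3}$, a genuinely K3-specific input. Your route is local: identify $\partial$ with a forget-supports map once the Yoneda class of the four-term sequence is known to be the generator of $\Ext^2_Y(i_*\underline{\bZ/2}_{C'},j_!\ubZ)\cong\bZ/2$. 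This avoids spectral sequences and the lattice computation, and would apply to any branched double cover with the same transverse model.

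There is, however, a real gap at exactly the point you flag. Statement (b) establishes the four-term sequence --- that the exponential has cokernel $i_*\underline{\bZ/2}_{C'}$ with the stated formula --- but this does \emph{not} by itself determine the Yoneda class in $\Ext^2$: a nonzero cokernel is perfectly compatible with a trivial $2$-extension, and on the local model $Y_{\mathrm{loc}}=\bC$ all the groups $H^k(\bC,j_!\ubZ)$ vanish, so $R\Gamma$ cannot detect the class. What is missing is an honest local computation. One clean way: on a transverse slice with involution $z\mapsto -z$, show that no \emph{compactly supported} section $f$ of $\cT^\iota$ can have $f(0)=-1$. Writing $f(re^{i\theta})=e^{i\psi_r(\theta)}$, anti-invariance forces $\psi_r(\theta)+\psi_r(\theta+\pi)=2\pi n_r$ with $n_r\in\bZ$ locally constant in $r$; compact support gives $n_r=0$ for $r\gg0$, hence for all $r>0$, and letting $r\to0$ yields $f(0)=1$. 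Thus the connecting map for $R\Gamma_c$ on the slice is nonzero, so the local $2$-extension class is the generator, and your identification of $\partial$ with the forget-supports map (an isomorphism since $H^3(Z,\ubZ)=H^4(Z,\ubZ)=0$) then goes through. With this step supplied, your argument is complete and arguably cleaner than the spectral-sequence comparison.
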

\begin{proof}
By Theorem \ref{thm:MoutvsGroth}, $H^2_\iota(X,\cT)\cong H^2(\bC\bP^2, \cF)$,
where the sheaf $\cF$ is $\bZ/2$ over $C$ and is locally isomorphic to
$\cT$ over the complement.  By \cite[Th\'eor\`eme
II.4.10.1]{MR0345092}, we obtain an exact sequence 
\begin{multline}
\label{eq:sheaffixedpts}
H^1(C, \bZ/2) \to H^2_{\iota,c}(X\smallsetminus C, \cT) \to  
H^2_\iota(X,\cT) \to H^2(C, \bZ/2) \\ \to H^3_{\iota,c}(X\smallsetminus C,
\cT) \to H^3_\iota(X,\cT) \to H^3(C, \bZ/2)=0.
\end{multline}  
Note that $(X\smallsetminus X^\iota)/\iota\cong \bC\bP^2\smallsetminus C$.
Thus in \eqref{eq:sheaffixedpts},
$H^j_{\iota,c}(X\smallsetminus C, \cT)\cong
H^{j+1}_c(\bC\bP^2\smallsetminus C, \ubZ)\cong
H_{3-j}(\bC\bP^2\smallsetminus C, \ubZ)$. Since $C\subset \bC\bP^2$ is
a curve of degree $6$, the map $H^2(\bC\bP^2, \bZ) \to H^2(C, \bZ)$
induced by the inclusion is multiplication by $6$, and we find from
the long exact sequence
\[
H^2(\bC\bP^2, \bZ)\xrightarrow{6} H^2(C, \bZ) \to
H^3_c(\bC\bP^2\smallsetminus C, \bZ) \to 0
\]
that
$H^3_c(\bC\bP^2\smallsetminus C, \bZ)\cong H_1(\bC\bP^2\smallsetminus
C, \bZ) \cong \bZ/6$. This implies that for $j\le 1$,
$H_j(\bC\bP^2\smallsetminus C, \ubZ)$ will coincide with the
$\ubZ$-homology of a lens space with fundamental group $\bZ/6$,
or with $H_j^{\text{group}}(\bZ/6, \ubZ)= \bZ/2$, $j$ even, and $0$,
$j$ odd.  Hence \eqref{eq:sheaffixedpts} reduces to
\[
0\to H^2_\iota(X,\cT) \to \bZ/2 \xrightarrow{\delta} \bZ/2 \to
H^3_\iota(X,\cT) \to 0, 
\]
and $H^2_\iota(X,\cT)$ is either $0$ or $\bZ/2$, depending on whether
the connecting map $\delta$ is nontrivial or not.

To complete the calculation, we use Theorem \ref{thm:MoutvsGroth}.
This identifies $H^2_\iota(X,\cT)$ with $\mathrm{I}_2^{2,0}$ in the spectral
sequence 
\[
\mathrm{I}_2^{p,q}=H^p(Y, H^q(\bZ/2, \cT))\Rightarrow H^{p+q}(X;
\bZ/2, \cT)
\]
of \cite[Th\'eor\`eme 5.2.1]{MR0102537}. We will examine
this spectral sequence as well as the other one in that theorem,
\[
\mathrm{II}_2^{p,q}=H^p(\bZ/2, H^q(X, \cT))\Rightarrow H^{p+q}(X;
\bZ/2, \cT). 
\]
First consider $\mathrm{I}_2^{\bullet,\bullet}$. We have a short exact sequence
of sheaves
\begin{equation}
\label{eq:sheaves}
1 \to (\cT)_{X\smallsetminus C} \to \cT \to (\cT)_C \to 1,
\end{equation}
and $\iota$ acts trivially on $C$ and freely on $X\smallsetminus C$.
Thus $H^q(\bZ/2, (\cT)_{X\smallsetminus C}) = 0$ for $q>0$
\cite[Corollaire 3, p.\ 205]{MR0102537}. So from the long exact
cohomology sequence derived from \eqref{eq:sheaves},
$H^q(\bZ/2, \cT) = H^q(\bZ/2, (\cT)_C)$ is supported on $C$ for $q>0$. 
On $C$, the action of
$\iota$ is by complex conjugation, and so one easily sees that
$H^q(\bZ/2, \cT)= H^q(\bZ/2, (\cT)_C) = (\bZ/2)_C$ for $q>0$ even, $0$
for $q$ odd.  So for $q>0$, $\mathrm{I}_2^{p,q}$ vanishes for $q$ odd and is
$H^p(C,\bZ/2)$ for $q$ even, which is $\bZ/2$ for $p=0$ or $2$,
$(\bZ/2)^{20}$ for $p=1$, and $0$ for $p>2$. In 
particular, $\mathrm{I}_2^{0,1}=0$, so $d_2\co  \mathrm{I}_2^{0,1} \to
\mathrm{I}_2^{2,0}$ vanishes and so the edge homomorphism
$H^2_\iota(X,\cT) \to H^2(X; \bZ/2, \cT)$ is injective.  Furthermore,
$\mathrm{I}_2^{1,1}=0$ and $\mathrm{I}_2^{0,2}=\bZ/2$, so $H^2(X;
\bZ/2, \cT)$ is finite 
and
\[
\left\vert H^2(X; \bZ/2, \cT)\right\vert \le 2\cdot \left\vert
H^2_\iota(X,\cT)\right\vert. 
\]
Equality will hold if and only if the map $d_3\co 
\mathrm{I}_3^{0,2}=\bZ/2 \to \mathrm{I}_3^{3,0} = H^3_\iota(X,\cT)$ is trivial. 

Now consider the other spectral sequence
$\mathrm{II}_2^{\bullet,\bullet}$. We have $H^0(X, \cT)=C(X,\bT)$,
which since $X$ is simply connected fits into an exact sequence
\begin{equation}
\label{eq:CXT}
0 \to \bZ \to C(X,\bR) \to C(X,\bT) \to 1.
\end{equation}
Now in the exact sequence \eqref{eq:shvs}, the action of $\bZ/2$ is
via a combination of the involution $\iota$ on $X$ and complex
conjugation, which corresponds to multiplication by $-1$ on $\cR$ 
and $\bZ$.  Thus as a $\bZ/2$ module, the group on the left in
\eqref{eq:CXT} is really $\ubZ$, $\bZ$ with the non-trivial action.
On the $q=0$ row, we use equation \eqref{eq:CXT} and the
fact that higher cohomology of a finite group with coefficients in a
real vector space has to vanish to obtain that
\[
\mathrm{II}_2^{p,0}= H^p(\bZ/2, C(X,\bT))
\cong H^{p+1}(\bZ/2, \ubZ) \cong 
\begin{cases}0, &p\text{ odd},\\ \bZ/2, &p\text{ even}>0.
\end{cases}
\]
For $q>0$, we know that $H^q(X, \cT)\cong H^{q+1}(X, \bZ)$, which will
be nonzero (and torsion-free) only for $q=1$ and $q=3$.
Again, since the action on $\bZ/2$ on the constant sheaf $\bZ$ in
\eqref{eq:shvs} is by multiplication by $-1$, the action of $\bZ/2$ on
$H^3(X, \cT)\cong H^4(X, \bZ)\cong \bZ$ is by multiplication by $-1$.
The case of $H^1(X, \cT)\cong H^2(X, \bZ)\cong \bZ^{22}$ is more
complicated because we also have the action of $\iota$ on
$H^2(X,\bZ)$, which has fixed set of rank $1$ \cite[p.\ 595]{MR2137825}.
So we need to determine the structure of $H^2(X,\bZ)$ as a
$\bZ/2$-module.  The action of $\iota$ on $H^2$ has to respect the intersection
pairing, with respect to which $H^2(X,\bZ)$ splits (non-canonically)
as $E_8\oplus E_8\oplus H\oplus H\oplus H$, where $E_8$ is the $E_8$
lattice and $H$ is a hyperbolic plane ($\bZ^2$ with form given by
$\begin{pmatrix} 0&1\\1&0\end{pmatrix}$). Since $H^2(X,\bZ)^\iota\cong
\bZ$, one can quickly see that the only possibility is that $\iota$
acts by $-1$ on both $E_8$ summands and on two of the $H$ summands,
and interchanges the generators of the other $H$ summand.  Our action
here of $\bZ/2$ is reversed from this, so as  $\bZ/2$-module,
$H^1(X, \cT)\cong \bZ^{20}\oplus H$.  Since $H^p(\bZ/2, \bZ)$ is
non-zero only for $p$ even and $H^p(\bZ/2, H)=0$ for $p>0$ (by simple
direct calculation, or else by Shapiro's Lemma, since $H$ as a
$\bZ/2$-module is induced from $\bZ$ as a module for the trivial
group), we find that $\mathrm{II}_2^{1,1} = H^1(\bZ/2, \bZ^{20}\oplus
H) = 0$.  Since we already computed that $\mathrm{II}_2^{0,2} = 0$ and
$\mathrm{II}_2^{2,0} = \bZ/2$, we see that $|H^2(X; \bZ/2, \cT)|\le
2$.  It will be $0$ only if $d_2\co \mathrm{II}_2^{0,1}\to
\mathrm{II}_2^{2,0}$ is non-zero.  Putting everything together, we
finally see that the only possibilities for the two spectral sequences
are as in Figures \ref{fig:SS1} and \ref{fig:SS2}.  Comparing the
(dotted) diagonal lines of total degrees $2$ and $3$ in the two figures, we
conclude that $H^2_\iota(X,\cT)$ and $H^3_\iota(X,\cT)$ must both vanish.
\end{proof}
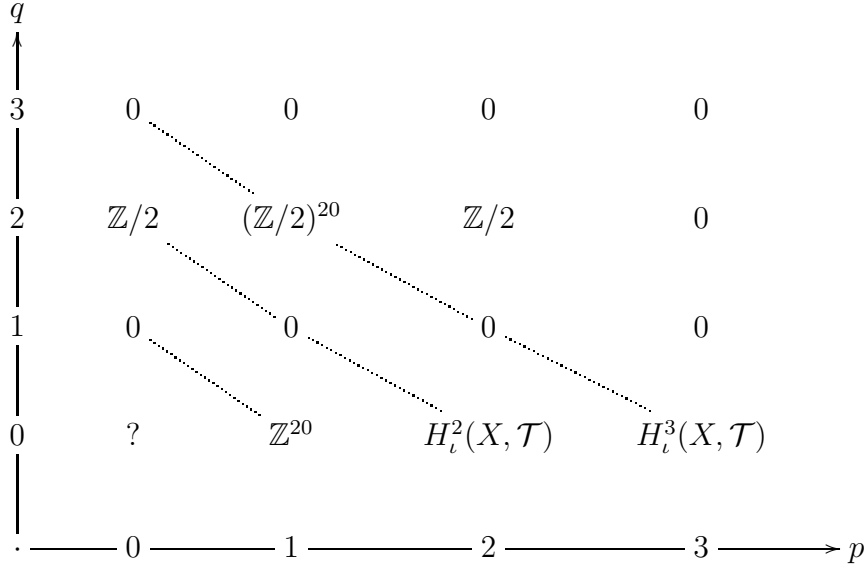
\begin{figure}[hbpt]
\[
\xymatrix{
q &&&&&\\
3\ar[u]& 0& 0& 0& 0 &\\
2\ar@{-}[u]& \bZ/2 & (\bZ/2)^{20} \ar@{.}[ul]& \bZ/2 & 0 &\\
1\ar@{-}[u]& 0& 0\ar@{.}[ul]& 0\ar@{.}[ul]& 0 &\\
0\ar@{-}[u]& ? & \bZ^{20} \ar@{.}[ul]
& H^2_\iota(X,\cT) \ar@{.}[ul]& H^3_\iota(X,\cT) \ar@{.}[ul]& \\
\cdot \ar@{-}[u]\ar@{-}[r]&  0 \ar@{-}[r]& 1 \ar@{-}[r]& 2 \ar@{-}[r]&
3\ar[r] & p} 
\]
\caption{The first Grothendieck spectral sequence
  $\mathrm{I}^{\bullet,\bullet}_2$}  
\label{fig:SS1}
\end{figure}
\begin{figure}[hbpt]
\[
\xymatrix{
q &&&&&\\
3\ar[u]& 0& \bZ/2& 0& \bZ/2 &\\
2\ar@{-}[u]& 0 & 0 \ar@{.}[ul] & 0 & 0 &\\
1\ar@{-}[u]& \bZ^{20}& 0\ar@{.}[ul]&(\bZ/2)^{20}\ar@{.}[ul]& 0& \\
0\ar@{-}[u]& ? &  0\ar@{.}[ul]& \bZ/2\ar@{.}[ul] & 0\ar@{.}[ul] & \\
\cdot \ar@{-}[u]\ar@{-}[r]&  0 \ar@{-}[r]& 1 \ar@{-}[r]& 2 \ar@{-}[r]&
3\ar[r] & p} 
\]
\caption{The second Grothendieck spectral sequence
  $\mathrm{II}^{\bullet,\bullet}_2$}  
\label{fig:SS2}
\end{figure}
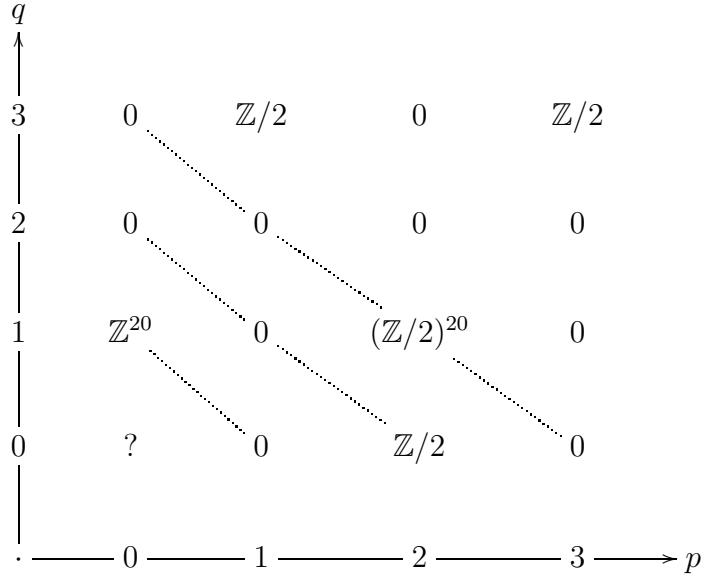

\section{$K$-Theory and Applications}
\label{sec:KR}

In this last section, we will briefly discuss the (topological)
$K$-theory of real {\Ca}s, and explain some key applications to
manifolds of {\psc} and to orientifold string theories in physics. We
should mention that other physical applications have appeared in the
theory of topological insulators in condensed matter theory
\cite{MR2806137,Kitaev},
though we will not go into this area here.  Along the way, connections
will show up with representation theory via the real Baum-Connes conjecture.

The (topological) $K$-theory of real {\Ca}s is of course a special
case of topological $K$-theory of real Banach algebras.  As such it
has all the usual properties, such as homotopy invariance and Bott
periodicity of period $8$. A convenient reference is \cite{MR1267059}.

A nice feature of the $K$-theory of real continuous-trace {\Ca}s is
that it unifies all the variants of topological $K$-theory (for
spaces) that have appeared in the literature.  This includes of course
real $K$-theory $KO$, complex $K$-theory $K$, and symplectic
$K$-theory $KSp$, but also Atiyah's ``Real'' $K$-theory $KR$
\cite{MR0206940}, Dupont's symplectic analogue of $KR$
\cite{MR0254839}, sometimes 
called $KH$, and the self-conjugate $K$-theory $KSC$ of Anderson
and Green \cite{MR0164347,Anderson}.  $KR^\bullet(X,\tau)$ is the
topological $K$-theory of the commutative real {\Ca} $C_0(X, \tau)$ of
Theorem \ref{thm:ArensKap}.  $KSC^\bullet(X)$ is $KR^\bullet(X\times
S^1)$, where $S^1$ is given the (free) antipodal involution
\cite[Proposition 3.5]{MR0206940}.  In addition, the $K$-theory of a
stable real continuous-trace with a sign choice (but vanishing
Dixmier-Douady invariant) is ``$KR$-theory with a sign choice'' as
defined in \cite{MR3267662}, and the $K$-theory of a stable real
continuous-trace with no sign choice but a nontrivial Dixmier-Douady
invariant is what has generally been called ``twisted $K$-theory'' (of
either real or complex type, depending on the types of the
irreducible representations of the algebra).  See
\cite{MR679694,MR1018964} for some of the original treatments, as well
as \cite{MR2172633,MR2513335} for more modern approaches.

\subsection{Positive scalar curvature}
\label{sec:psc}

A first area where real {\Ca}s and their $K$-theory plays a
significant role is the classification of manifolds of {\psc}.
The first occurrence of real {\Ca}s in this area is implicit in an
observation of Hitchin \cite{MR0358873}, that if $M$ is a compact
Riemannian spin manifold of dimension $n$ with {\psc}, then the
$KO_n$-valued index of the Dirac operator on $M$ has to vanish.  For
$n$ divisible by $4$, this observation was not new and goes back to
Lichnerowicz \cite{MR0156292}, but for $n\equiv 1,2$ mod $2$, a new torsion
obstruction shows up that cannot be ``seen'' without real $K$-theory.

The present author observed that there is a much more extensive
obstruction theory when $M$ is not simply connected.  Take the
fundamental group $\pi$ of $M$, a countable discrete group.
Complete the real group ring $\bR\pi$ in its greatest $C^*$-norm to get the
real group {\Ca} $A=C^*_{\bR}(\pi)$. (Alternatively, one could use
the reduced real group {\Ca} $A_r=C^*_{\bR, r}(\pi)$, the completion of
the group ring for its left action on $L^2(\pi)$.  For present purposes
it doesn't much matter.) Coupling the Dirac operator on $M$
to the universal flat $C^*_{\bR}(\pi)$-bundle $\widetilde M \times_\pi
A$ over $M$, one gets a Dirac index with values in $KO_n(A)$, which
must vanish if $M$ has {\psc}.  Thus we have a new source of
obstructions to {\psc}.

As shown in \cite{MR842428,MR1133900}, this $KO_n(A)$-valued index
obstruction can be computed to be $\mu\circ f_*(\alpha_M)$, where
$\alpha_M\in KO_n(M)$ is the ``Atiyah orientation'' of $M$, i.e., the
$KO$-fundamental class defined by the spin structure, $f\co M\to B\pi$
is a classifying map for the universal cover $\widetilde M \to M$,
and $\mu\co KO_n(B\pi) \to KO_n(A)$ is the ``real assembly map,''
closely related to the Baum-Connes assembly map in
\cite{MR1292018}.\footnote{The relationship is this. Let
  $\underline{E}\pi$ denote the universal proper $\pi$-space and let
  $E\pi$ denote the universal free $\pi$-space. These coincide if and
  only if $\pi$ is torsion-free.  The Baum-Connes assembly map is
  defined on $KO_\bullet^\pi(\underline{E}\pi)$ whereas our map is
  defined on $KO_\bullet^\pi({E}\pi) = KO_n(B\pi)$. Since $E\pi$ is a
  proper $\pi$-space, there is a canonical $\pi$-map $E\pi \to
  \underline{E}\pi$ (unique up to equivariant homotopy)
  and so our $\mu$ factors through the Baum-Connes
  assembly map, and agrees with it if $\pi$ has no torsion.}

In \cite[Theorem 2.5]{MR1133900}, I showed that for $\pi$ finite, the
image of the reduced assembly map $\mu$ (what one gets after pulling out the
contribution from the trivial group, i.e., the Lichnerowicz and
Hitchin obstructions) is precisely the image in $KO_\bullet(\bR\pi)$
of the $2$-torsion in $KO_\bullet(\bR\pi_2)$, $\pi_2\subseteq \pi$ a Sylow
$2$-subgroup. This lives in degrees $1$ and $2$ mod $4$ and comes from the
irreducible representations of $\pi_2$ of real and quaternionic type,
in the sense that we explained in Theorem \ref{thm:AandAC}.  So far
the obstructions detected by $\mu$ are the only known obstructions to
{\psc} on closed spin manifolds of dimension $>4$ with finite
fundamental group.

The problem of existence or non-existence of {\psc} on a spin manifold
can be split into two parts, one ``stable'' and one ``nonstable.''
Stability here refers to taking the product with enough copies of a
``Bott manifold'' $\mathrm{Bt}^8$, a simply connected closed Ricci-flat
$8$-manifold 
representing the generator of Bott periodicity.  Since index
obstructions in $K$-theory of real {\Ca}s live in groups which are
periodic mod $8$, stabilizing the problem by crossing with copies of
$\mathrm{Bt}^8$ compensates for this by introducing
$8$-periodicity on the geometric
side.  Indeed it was shown in \cite{MR1321004} that the stable
conjecture ($M\times \mathrm{Bt}^k$ admits a metric of {\psc} for
sufficiently large $k$ if and only if $\mu\circ f_*(\alpha_M)$
vanishes) holds when $\pi$ is finite.  Stolz has extended this theorem
as follows: for a completely general closed spin manifold $M^n$ with
fundamental group $\pi$, $M\times \mathrm{Bt}^k$ admits a metric of {\psc} for
sufficiently large $k$ if and only if $\mu\circ f_*(\alpha_M)$
vanishes, provided that the real Baum-Connes conjecture (bijectivity
of the Baum-Connes assembly map $\mu\co KO_\bullet^\pi({E}\pi)\to
KO_\bullet(C^*_{\bR,r})$) holds for $\pi$.  In fact, Baum-Connes can be
weakened here in two ways --- one only needs injectivity of $\mu$, not
surjectivity, and one can replace $C^*_{\bR,r}$ by the full {\Ca}
$C^*_{\bR}$.  Since the full {\Ca} surjects onto the reduced {\Ca},
injectivity of the assembly map for the full {\Ca} is a weaker condition.
Sketches of Stolz's theorem may be
found in \cite{MR1403963,MR1937026}, though unfortunately the full
proof of this was never written up.

Since the real version of the Baum-Connes conjecture has just been
seen to play a fundamental role here, it is worth remarking that 
the real and complex versions of the Baum-Connes conjecture are
actually equivalent \cite{MR2082090,MR2077669}.  Thus the real
Baum-Connes conjecture holds in the huge number of cases where the
complex Baum-Connes conjecture has been verified.

\subsection{Representation theory}
\label{sec:rep}

Since we have already mentioned the real Baum-Connes conjecture, it is
worth mentioning that this, as well as the general theory of real {\Ca}s,
has some relevance to representation theory.  Suppose $G$ is a locally
compact group (separable, say, but not necessarily discrete).  The
real group {\Ca} $C^*_{\bR}(G)$ 
is the completion of the real $L^1$-algebra (the convolution algebra
of real-valued $L^1$ functions on $G$) for the maximal {\Ca}
norm. Obviously this defines a canonical real structure on the complex
group {\Ca} $C^*(G)$, and similarly we have $C^*_{\bR,r}(G)$ inside
the reduced {\Ca} $C^*_r(G)$.  Computing the structure of
$C^*_{\bR}(G)$ or of $C^*_{\bR,r}(G)$ gives us more information than
just computing the structure of their complexifications.  For
instance, it gives us the type classification of the representations,
as we saw in Theorem \ref{thm:AandAC} and Example \ref{ex:types}.
The real Baum-Connes conjecture, when it's known to hold, gives us at
least partial information on the structure of $C^*_{\bR,r}(G)$ (its
$K$-theory).

Here are some simple examples (where the real structure is not totally
uninteresting) to illustrate these ideas.
\begin{example}
\label{ex:SU2}
Let $G=SU(2)$. As is well known, this has (up to equivalence)
irreducible complex representation of each positive integer
dimension. It is customary to parameterize the representations $V_k$ by the
value of the ``spin'' $k=0, \frac12, 1, \frac32, \cdots$ (this is the
highest weight divided by the unique positive root), so that
$\dim V_k=2k+1$. The character $\chi_k$ of $V_k$ is given on a
maximal torus by $e^{i\theta}\mapsto \frac{\sin
  (2k+1)\theta}{\sin\theta}$, which is real-valued, and thus all the
representations must have real or quaternionic type. In fact, $V_k$ is
of real type if $k$ is an integer and is of quaternionic type if $k$
is a half-integer.  (That's because $V_1$ is the complexification of
the covering map $SU(2)\to SO(3)$, while $V_{1/2}$ acts on the unit
quaternions, and all the other representations can be obtained from
these by taking 
tensor products and decomposing. A tensor product of real
representations is real, and a tensor product of a real representation
with a quaternionic one is quaternionic.) Indeed if one computes
the Frobenius-Schur indicator
$\int_G\chi_k(g^2)\,dg$ for $V_k$ using the Weyl integration formula,
one gets
\[
\begin{aligned}
  \int_G\chi_k&(g^2)\,dg = \frac{1}{\pi}\int_0^{2\pi} \Bigl(e^{4ki\theta}
  +e^{4(k-1)i\theta}+\cdots+ e^{-4ki\theta}
  \Bigr)\,{\sin^2\theta}\,d\theta \\
  &= \frac{1}{4\pi}\int_0^{2\pi} \Bigl(e^{4ki\theta}
  +e^{4(k-1)i\theta}+\cdots+ e^{-4ki\theta}
  \Bigr)\,\Bigl(2 - e^{2i\theta}-   e^{-2i\theta} \Bigr)\,d\theta .
\end{aligned}
\]
If $k$ is an integer, we get
\[
\frac{1}{4\pi}\int_0^{2\pi}
\Bigl((\cdots + e^{4i\theta})+1+(e^{-4i\theta}+\cdots) \Bigr)\,
\Bigl(2 - e^{2i\theta}-   e^{-2i\theta} \Bigr)\,d\theta = 1,
\]
where the terms in small parentheses are missing if $k=0$, while if $k$ is a
half-integer, we get 
\[
\frac{1}{4\pi}\int_0^{2\pi}
\Bigl(\cdots + e^{2i\theta}+e^{-2i\theta}+\cdots \Bigr)\,
\Bigl(2 - e^{2i\theta}-   e^{-2i\theta} \Bigr)\,d\theta = -1.
\]
This confirms the type classification we gave earlier.

Thus $C^*_\bR(G) \cong \bigoplus_{k\in \bN} M_{2k+1}(\bR) \oplus
\bigoplus_{k=\frac12,\frac32, \cdots} M_{k+\frac12}(\bH)$. In
particular, we see that $KO_\bullet(C^*_\bR(G))\cong
(KO_\bullet)^\infty \oplus (KSp_\bullet)^\infty$, and in degrees $1$
and $2$ mod $4$, this is an infinite direct sum of copies of $\bZ/2$,
whereas the torsion-free contributions appear only in degrees
divisible by $4$.  Conversely, if one had some independent method of
computing $KO_\bullet(C^*_\bR(G))$, it would immediately tell us that
$G$ has no irreducible representations of complex type, and infinitely
many representations of both real and of quaternionic type.
\end{example}
\begin{example}
\label{ex:Weil}
Let $H$ be the compact group $\bT \cup j\,\bT$, where $j$ is an
element with $j^2=-1$, $jzj^{-1}=\overline{z}$ for $z\in \bT$. This is
a nonsplit extension of $\text{Gal}(\bC/\bR)\cong
\bZ/2$ by $\bT$ and is secretly the maximal
compact subgroup of $W_\bR$, the Weil group of the reals (which splits
as $\bR^\times_+\times H$). The induced action of $j$ on
$\widehat{\bT}=\bZ$ sends $n\mapsto -n$. So the Mackey machine tells
us that the irreducible complex representations of $H$ are the
following:
\begin{enumerate}
\item two one-dimensional representations $\chi_0^\pm$ which are
  trivial on $\bT$ and send $j\mapsto \pm 1$. These representations
  are obviously of real type.
\item a family $\pi_n=\Ind_{\bT}^H \sigma_n, \, n\in
  \bZ\smallsetminus \{0\}$ of two-dimensional representations, where
  $\sigma_n(z) = z^n,\,z\in \bT$. These representations are all of
  quaternionic type since they come from complexifying the
  representation $H\to \bH^\times$ given by $z\mapsto z^n$, $j\mapsto j$.
\end{enumerate}
We immediately conclude that $C^*_{\bR}(H)\cong \bR\oplus \bR \oplus
(\bH)^\infty$. Thus\break $KO_\bullet(C^*_{\bR}(H))$ is elementary abelian
of rank $2$ in degrees $1$ and $2$ mod $8$, and is $(\bZ/2)^\infty$ in
degrees $5$ and $6$ mod $8$.  Again, if we had an independent way to
compute $KO_\bullet(C^*_{\bR}(H))$, it would tell us about the types
of the representations.
\end{example}
\begin{example}
\label{ex:SL2}
A slightly more interesting example is $G=SL(2,\bC)$, a simple complex
Lie group with $K=SU(2)$ as maximal compact subgroup. The reduced dual
of $G$ is Hausdorff, and the complex reduced {\Ca} $C^*_r(G)$ is a
stable continuous-trace algebra with trivial Dixmier-Douady invariant,
i.e., it is Morita equivalent to $C_0(\widehat G_r)$.  All the
irreducible complex representations of $C^*_r(G)$ are principal series
representations, and all unitary principal series are irreducible.
Thus we see that $C^*_r(G)$ is Morita equivalent to $C_0(\widehat
M/W)$, where $M$ is a Cartan subgroup, which we can take to be
$\bC^\times$, and $W=\{\pm1\}$ is the Weyl group, which acts on
$\widehat{\bC^\times} \cong \bZ\times \bR$ by $-1$ (on both factors).
So $C^*_r(G)$ is Morita equivalent to $C_0([0,\infty)) \oplus
  \bigoplus_{n\ge 1} C_0(\bR)$, with $[0,\infty)= (\{0\}\times \bR)/W$.
(For all of this one can see \cite[V]{MR0164248} or \cite{MR724030},
for example.) The complex Baum-Connes map gives an isomorphism
$K_\bullet^G(G/K)\cong R(K)\otimes K_{\bullet-3} \xrightarrow{\mu}
K_\bullet(C^*_r(G))$, sending the generator of the representation ring
$R(K)$ associated to $V_k$ (in the notation
of Example \ref{ex:SU2}) to the generator of the $\bZ$ summand in
$K_0(C^*_r(G))$ associated to the principal series with discrete parameter
$\pm (2k+1)$.  There is no contribution to $K_0(C^*_r(G))$ from the 
spherical principal series (corresponding to the
fixed point $n=0$ of $W$ on $\bZ$) since $\bR/\{\pm1\}\cong [0, \infty)$ is
properly contractible. 

Now let's analyze the real structure.  We can start by looking at
$K$-types.  The group $SL(2,\bC)$ is the double cover of the Lorentz
group $SO(3,1)_0$.  Representations that descend to $SO(3,1)_0$ must
have $K$ types that factor through $SU(2)\to SO(3)$, and so have
integral spin.  All integral spin representations have real type, so
these representations are also of real type, at least when restricted
to $K$. The genuine representations of $SL(2,\bC)$ that do not descend
to $SO(3,1)_0$ must have $K$-types with half-integral spin, and these
representations are of quaternionic type, at least when restricted
to $K$.  There is one principal series which is obviously of real
type, namely the ``$0$-point'' of the spherical principal series,
since this representation is simply $\Ind_B^G 1$, where $B$ is a Borel
subgroup.  Since the trivial representation of $B$ is of real type, we
get a real form for the complex induced representation by using
induction with real 
Hilbert spaces instead.  And thus we get an irreducible real
representation on $L^2_{\bR}(G/B)\cong L^2_{\bR}(K/\bT)$.  In fact the
other spherical principal series can be realized on this same Hilbert
space (see \cite[p.\ 261]{MR0164248}) so they, too, are of real type.
But this method won't work for other characters of
$B$ since none of the other one-dimensional unitary characters of $\bC^\times$
are of real type.  If we look at a principal series representation of
$G$ with discrete parameter $\pm n\in \bZ$, its restriction to $K$ can
be identified with $\Ind_{\bT}^K\chi_n$, which by Frobenius
reciprocity contains $V_k$ with multiplicity equal to the multiplicity
of $\chi_n$ in $V_k$. This is $0$ if $2k$ and $n$ have opposite parity
or if $|n|>2k$, and is $1$ otherwise.  So this principal series
representation has all its $K$-types of multiplicity $1$ and has real
(resp., quaternionic) type when restricted to $K$ provided $n$ is even
(odd).

We can analyze things in more detail by seeing what the involution
(of Proposition \ref{prop:Ahatinv}) on
$\widehat G$ does to the principal series. Clearly it sends
$\Ind_B^G\chi$ to $\Ind_B^G\overline\chi$, if $\chi$ is a
one-dimensional representation of $M$ viewed as a representation of
$B$. But since $W=\{\pm 1\}$, $\overline{\chi} = w\cdot \chi$, for $w$
the generator of $W$, and we get an equivalent representation.  Thus
\emph{the involution on $\widehat{G}_r$ is trivial}. One can also
check this very easily by observing that all the characters of $G$ are
real-valued. (See for example \cite[Theorem 5.5.3.1]{MR0498999}, 
where again the key fact for us is that $w\cdot \chi = \overline{\chi}$.)
J.\ Adams has studied this property in much greater
generality and proved:
\begin{theorem}[Adams {\cite[Theorem 1.8]{MR3292297}}]
\label{thm:Adams}
If $G$ is a connected reductive algebraic group over $\bR$ with
maximal compact subgroup $K$, if $-1$ lies in the Weyl group of the
complexification of $G$, and if every irreducible representation
of $K$ is of real or quaternionic type, then every unitary
representation of $G$ is also of real or quaternionic type.
\end{theorem}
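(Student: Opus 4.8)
The plan is to translate the statement, by a character computation organized around the real Chevalley involution, into a structural property of $G$. First, the reductions. An irreducible unitary representation of $G$ is of real or quaternionic type exactly when it admits a conjugate-linear $G$-intertwiner, that is, when it is equivalent to its complex conjugate $\bar\pi$, and is of complex type exactly when $\pi\not\cong\bar\pi$ (the trichotomy behind Corollary \ref{cor:3types} and Theorem \ref{thm:AandAC}). Since an arbitrary unitary representation is a direct integral of irreducibles, the theorem amounts to the assertion that \emph{no} irreducible unitary representation is of complex type, i.e.\ that the involution of Proposition \ref{prop:Ahatinv} on $\widehat{C^*(G)}=\widehat G$ is trivial: $\pi\cong\bar\pi$ for every irreducible unitary $\pi$. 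As $\pi$ is unitary, $\bar\pi$ is canonically the contragredient $\pi^\vee$ (identify $\overline{\cH}$ with $\cH^*$ using the inner product), so the goal is $\pi\cong\pi^\vee$.

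Next I would invoke the real Chevalley involution $C$ of $G$: an involutive automorphism commuting with the Cartan involution $\theta$ (hence stabilizing $K$), acting as $-1$ on a maximally split $\theta$-stable Cartan subgroup, and determined up to inner automorphisms by the requirement that $C(g)$ be $G$-conjugate to $g^{-1}$ for every $g$. Because distributional characters are conjugation-invariant and separate irreducible admissible representations (Harish-Chandra), this requirement gives $\Theta_{\pi^\vee}(g)=\Theta_\pi(g^{-1})=\Theta_\pi(C(g))=\Theta_{\pi^C}(g)$, hence $\pi^\vee\cong\pi^C$ for every irreducible $\pi$. So the theorem becomes the claim that $C$ acts trivially on the equivalence classes of irreducible unitary representations, which will follow once $C$ is shown to be an \emph{inner} automorphism of $G$, equivalently once every element of $G$ is conjugate to its inverse.

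The two hypotheses are used at this step. The assumption that every irreducible representation of $K$ is of real or quaternionic type is precisely the condition $-1\in W(K_\bC)$, i.e.\ that the Chevalley involution $C|_K$ of the compact connected group $K$ is inner in $K$; adjusting $C$ by an inner automorphism, we may take it to be the identity on $K$, so that every elliptic element of $G$ is already conjugate to its inverse. The remaining ``split'' directions --- the split parts of the various Cartan subgroups --- are controlled by $-1\in W(G_\bC)$: being central, this $-1$ commutes with $\theta$, and one checks that it descends to the relevant \emph{real} Weyl groups $W(G,H)$, first for a maximally split Cartan (where it is the longest element of the restricted Weyl group) and then for every Cartan subgroup. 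With these two inputs one concludes that $C$ is inner, and therefore $\pi\cong\pi^C\cong\pi^\vee\cong\bar\pi$ for every irreducible unitary $\pi$.

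I expect the last step to be the main obstacle, and both hypotheses are genuinely required for it. On its own $-1\in W(G_\bC)$ does not force $C$ to be inner: for $G=\mathrm{SL}(2,\bR)$ one has $-1\in W$, yet the real Chevalley involution is outer and the holomorphic discrete series are of complex type, the obstruction living on the compact Cartan $\mathrm{SO}(2)$, whose Weyl group in $\mathrm{SL}(2,\bR)$ is trivial. The hypothesis on $K$ is exactly what supplies the missing $-1$ in the Weyl group of every compact Cartan, and only its combination with $-1\in W(G_\bC)$ on the split Cartans renders $C$ inner; the careful matching of Weyl-group actions across all conjugacy classes of Cartan subgroups of $G$, using the explicit form of the real Chevalley involution for the real forms that satisfy both hypotheses, is where most of the work sits.
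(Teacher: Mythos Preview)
The paper does not prove Theorem~\ref{thm:Adams}; it is quoted verbatim from Adams \cite[Theorem 1.8]{MR3292297} and used as a black box in Examples~\ref{ex:SL2} and~\ref{ex:classicalgps}. So there is no ``paper's own proof'' to compare against.

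That said, your outline is broadly the strategy of Adams' paper: construct the real Chevalley involution $C$, show that $\pi^C\cong\pi^\vee$ for admissible $\pi$ via a distributional-character argument, and then reduce the self-duality of every irreducible unitary to the question of whether $C$ is inner. Your identification of the two hypotheses with the two ingredients needed (the $K$-hypothesis handling the compact Cartans, $-1\in W(G_\bC)$ handling the split directions) is on the right track, and your $\SL(2,\bR)$ example correctly isolates why both are needed.

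Two points where your sketch is incomplete rather than wrong. First, you describe $C$ as ``determined up to inner automorphisms by the requirement that $C(g)$ be $G$-conjugate to $g^{-1}$ for every $g$,'' but the existence of such an involution defined over $\bR$ is itself a theorem (the main construction in Adams' paper), not a definition you can simply invoke; establishing this is a substantial part of the argument. Second, your passage ``one checks that it descends to the relevant real Weyl groups $W(G,H)$'' is exactly the crux you flag at the end, and it is not a routine check: the element realizing $-1$ in $W(G_\bC)$ need not be rational, and arranging representatives that lie in $G(\bR)$ for \emph{every} $\theta$-stable Cartan simultaneously requires the case analysis you allude to. You are right that this is where most of the work sits, but as written the proposal is a roadmap to Adams' argument rather than a proof.
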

  
Thus we know that the involution on $\widehat G_r$ is trivial and that
$C^*_r(G)$ is a real {\Ca} of continuous trace, with spectrum a
countable union of contractible components, all but one of which are
homeomorphic to $\bR$, with the exceptional component homeomorphic to
$[0,\infty)$. The real Dixmier-Douady invariants must all vanish since
$H^2(\widehat G_r, \bZ/2)=0$.  The sign choice invariants are now
determined by the $K$-types, since all the $K$-types have multiplicity
one and thus a representation of $G$ of real (resp., quaternionic)
type must have all its $K$-types of the same type.
Putting everything together, we see that we have proved the following
theorem:
\begin{theorem}
\label{thm:SL2C}
The reduced real {\Ca} of $SL(2,\bC)$ is a stable real
continuous-trace algebra, Morita equivalent to
\[
C_0^{\bR}([0,\infty)) \oplus \bigoplus_{n>0\text{ even}}
  C_0^{\bR}(\bR) \oplus \bigoplus_{n>0\text{ odd}} C_0^{\bH}(\bR).
\]
\end{theorem}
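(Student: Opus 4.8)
The plan is to read off, from the representation theory of $G=\SL(2,\bC)$ summarized above, the two Moutuou invariants of the real algebra $A=C^*_{\bR,r}(G)$, and then quote Theorem~\ref{thm:Mout}. The complex input I would take as given (see \cite{MR0164248,MR724030}): $A_\bC=C^*_r(G)$ is a continuous-trace algebra with vanishing Dixmier--Douady class, Morita equivalent to $C_0(\widehat G_r)$, and its spectrum $X=\widehat G_r$ is the disjoint union of one half-line $[0,\infty)=\bR/W$ (the spherical principal series) and, for each integer $n\ge1$, one copy of $\bR$ (the unitary principal series $\Ind_B^G\chi$ whose discrete parameter is the $W$-orbit $\{n,-n\}$). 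In particular $X$ is a countable disjoint union of contractible spaces. Since a real {\Ca} has continuous trace exactly when its complexification does, $A$ is a stable real continuous-trace algebra, and Theorem~\ref{thm:Mout} reduces the determination of its spectrum-fixing Morita class to that of its Real space $(X,\iota)$, its sign choice $\alpha\in H^0(X^\iota,\bZ/2)$, and its real Dixmier--Douady class in $H^2_\iota(X,\cT)$.

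First I would check that the involution $\iota$ of Proposition~\ref{prop:Ahatinv} on $X$ is trivial: it carries the class of $\Ind_B^G\chi$ to that of $\Ind_B^G\overline\chi$, and since $\overline\chi=w\cdot\chi$ for $w$ the nontrivial element of $W=\{\pm1\}$ while normalized induction is insensitive to the $W$-action on the inducing character, $\Ind_B^G\overline\chi\cong\Ind_B^G\chi$ (equivalently, all characters of $G$ are real-valued, \cite[Theorem~5.5.3.1]{MR0498999}). With $\iota$ trivial, the first example following Theorem~\ref{thm:MoutvsGroth} identifies $H^2_\iota(X,\cT)$ with $H^2(X,\bZ/2)$, which is $0$ because each component of $X$ is contractible; hence the real Dixmier--Douady invariant vanishes automatically, $A$ has no irreducible representations of complex type (there are no free $\iota$-orbits), and by Theorem~\ref{thm:Mout} the algebra is pinned down up to spectrum-fixing Morita equivalence by its sign choice alone.

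Next I would compute $\alpha$, which by Definition~\ref{def:sign} is locally constant, hence constant on each series. For a representation $\pi$ in the series with discrete parameter $n$ (allowing $n=0$), the restriction $\pi|_K$ to $K=\SU(2)$ is $\Ind_\bT^K\chi_n$, whose irreducible constituents are exactly the spin-$k$ representations $V_k$ with $2k\ge|n|$ and $2k\equiv n\pmod{2}$, each with multiplicity one. If $\pi$ came from an irreducible real representation $\pi_0$ of quaternionic type, then by Theorem~\ref{thm:AandAC} its complexification $\pi$ would be two copies of a single complex irreducible, so every $K$-type would occur in $\pi|_K$ with even multiplicity --- impossible here; and if $\pi_0$ were of real type, then each irreducible real summand of $\pi_0|_K$ is, by Example~\ref{ex:SU2} (which also records that $\SU(2)$ has no complex-type irreducibles), either the real form of a real-type $V_k$ or the underlying real representation of a quaternionic-type $V_k$, complexifying respectively to $V_k$ or to $V_k\oplus V_k$, so the multiplicity-one condition forces every $K$-type of $\pi|_K$ to have integral spin. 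Therefore, for $n$ even all the $V_k$ occurring are integral-spin, so $\pi_0$ is not quaternionic and --- being of neither complex nor quaternionic type --- is of real type, giving $\alpha=+$; for $n$ odd all the $V_k$ occurring are half-integral-spin, so $\pi_0$ cannot be of real type and is quaternionic, giving $\alpha=-$. (As a consistency check, the endpoint of the spherical series is $\Ind_B^G 1$, which is of real type because the trivial representation of $B$ is, matching $\alpha=+$ on $[0,\infty)$.)

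Putting this together, $A$ is a stable real continuous-trace algebra over $(X,\id)$ with vanishing Dixmier--Douady class and with $\alpha=+$ on $[0,\infty)$ and on the $n$-even lines and $\alpha=-$ on the $n$-odd lines. The algebra $C_0^\bR([0,\infty))\oplus\bigoplus_{n>0\text{ even}}C_0^\bR(\bR)\oplus\bigoplus_{n>0\text{ odd}}C_0^\bH(\bR)$ is manifestly a stable real continuous-trace algebra over the same Real space, with vanishing Dixmier--Douady class and with the same sign choice (its commutative part being the algebra attached to the trivial involution by Theorem~\ref{thm:ArensKap}, and its sign being $+$ on the $C_0^\bR$ summands and $-$ on the $C_0^\bH$ summand), so Theorem~\ref{thm:Mout} identifies the two up to spectrum-fixing Morita equivalence. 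The step I expect to demand the most care is this sign-choice computation --- in particular making precise why multiplicity-one $K$-types force the global real-versus-quaternionic type, since one must track how complexification interacts with the $K$-isotypic decomposition and exploit that $\iota$ trivial already rules out complex type; everything else is bookkeeping on top of Theorems~\ref{thm:Mout} and \ref{thm:AandAC} and the cited harmonic analysis on $\SL(2,\bC)$.
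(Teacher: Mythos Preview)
Your approach is the paper's: trivialize the involution via $\overline\chi=w\cdot\chi$, kill the real Dixmier--Douady class by contractibility of the components of $\widehat G_r$, read the sign choice off the multiplicity-one $K$-types, and invoke Theorem~\ref{thm:Mout}. One step is garbled, though. In your quaternionic-case argument you write ``its complexification $\pi$ would be two copies of a single complex irreducible, so every $K$-type would occur in $\pi|_K$ with even multiplicity --- impossible here,'' but here $\pi$ has silently changed meaning from the complex irreducible principal series (whose $K$-restriction you computed to be multiplicity-free) to $(\pi_0)_\bC$, and in the quaternionic case these differ: $(\pi_0)_\bC\cong\pi\oplus\pi$. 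That $(\pi_0)_\bC|_K$ then has even multiplicities is automatic and contradicts nothing about $\pi|_K$. This leaves the $n$-even conclusion (``$\pi_0$ is not quaternionic'') unjustified, even though your $n$-odd argument via the real-type case is fine.

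The clean fix---which is what the paper's terse ``a representation of $G$ of real (resp., quaternionic) type must have all its $K$-types of the same type'' is encoding---is to work directly with the conjugate-linear $G$-intertwiner $\varepsilon$ on the complex irreducible $\pi$ described in the Remark after Theorem~\ref{thm:AandAC}: since $\varepsilon$ commutes with $K$, it preserves each multiplicity-one $K$-isotypic component $V_k\subset\pi$ and equips it with a real ($\varepsilon^2=1$) or quaternionic ($\varepsilon^2=-1$) structure, forcing $V_k$ to have the same Frobenius--Schur type as $\pi$. With that repair both parities go through, and the rest of your argument matches the paper exactly.
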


Schick's proof in \cite{MR2077669} that the complex Baum-Connes
conjecture implies the real Baum-Connes conjecture is stated only for
discrete groups, but it goes over without difficulty to general
locally compact groups, at least in the case of trivial coefficients.
Since the complex Baum-Connes conjecture (without coefficients) is
known for all connective reductive Lie groups \cite{MR894996,MR1914617}, 
the real Baum-Connes map
is an isomorphism $KO_\bullet^G(G/K) \xrightarrow{\mu} KO_\bullet(C^*_{\bR,r}(G))$.
This by itself gives some information on the real
structure of the various summands in $C^*_{\bR,r}(G)$. Since $G/K$ has
a $G$-invariant spin structure in this case, by the results of
\cite[\S5]{MR918241}, $KO_\bullet^G(G/K) \cong KO_\bullet^K(G/K) \cong
KO_\bullet^K(\fp)$, which by equivariant Bott periodicity is
$KO_{\bullet-3}(C^*_{\bR}(K))$, which we computed in Example
\ref{ex:SU2}. (Here $\fp$ is the orthogonal complement to the Lie
algebra of $K$ inside the Lie algebra of $G$. In this case, $\fp$ is
isomorphic as a $K$-module to the adjoint representation of $K$.)
On the other side of the isomorphism, we have
$KO_\bullet(C_0^{\bR}(\bR)) \cong KO^{-\bullet}(\bR)\cong
KO^{-\bullet-1}(\pt) \cong KO_{\bullet+1}$, and similarly
$KO_\bullet(C_0^{\bH}(\bR)) \cong KSp^{-\bullet}(\bR)\cong
KO^{-\bullet-5}(\pt) \cong KO_{\bullet+5}$. So the torsion-free
summands in $KO_\bullet(C^*_{\bR,r}(G))$ are all in degrees $3$ mod $4$.
Since there are no torsion-free summands in $KO_\bullet(C^*_{\bR,r}(G))$
in degrees $1$
mod $4$, we immediately conclude that no unitary principal series
(except perhaps for the spherical principal series, which don't
contribute to the $K$-theory) are of complex type, and that there are
infinitely many lines of principal series
of both real and quaternionic type.  This is
a large part of Theorem \ref{thm:SL2C}, and is not totally
trivial to check directly. 

One interesting feature of the Baum-Connes isomorphism is the degree
shift.  Since $KO_\bullet^G(G/K) \cong KO_{\bullet-3}(C^*_{\bR}(K))$,
while $KO_\bullet(C^*_{\bR,r}(G))$ is a sum of copies of
$KO_{\bullet+1}$, associated to principal series with $K$-types of
integral spin and $KO_{\bullet+5}$, associated to principal series
with $K$-types of half-integral spin, we see that representations of
$K$ of integral spin on the left match with those of half-integral
spin on the right, and those of half-integral spin on the left match
with those of integral spin on the right.  This is due to the
``$\rho$-shift'' in Dirac induction. If we were to replace $SL(2,\bC)$
by the adjoint group $G=PSL(2,\bC)$, the maximal compact subgroup would
become $K=SO(3)$, with $K$-types only of integral spin, but on the left,
since $G/K$ would no longer have a $G$-invariant spin structure, 
$KO_\bullet(G/K)$ would be given by genuine
representations of the double cover of $K$, i.e., representations only
of half-integral spin.
\end{example}
\begin{example}
\label{ex:classicalgps}
The techniques we used in Example \ref{ex:SL2} and Theorem
\ref{thm:SL2C} can also be used to compute the reduced real
{\Ca}s of arbitrary connected complex reductive Lie groups. A useful
starting point is \cite[Proposition 4.1]{MR724030}, which states that
for such a group $G$, $\widehat G_r$ is Hausdorff and
$C^*_r(G)\cong C_0(\widehat G_r)\otimes \cK$ is a stable
continuous-trace algebra with trivial Dixmier-Douady class.
The cases of $SO(4n,\bC)$, $SO(2n+1,\bC)$, and $Sp(n,\bC)$ are
particularly easy (and interesting). By \cite[Theorem
7.7]{MR0252560} or \cite[VI.(5.4)(vi)]{MR1410059}, all representations of
$SO(2n+1)$ are of real type, and by \cite[Theorem
7.9]{MR0252560} or \cite[VI.(5.5)(ix)]{MR1410059}, all representations of
$SO(4n)$ are of real type.  Thus we obtain
\begin{theorem}
\label{thm:SOnC}
Let $G=SO(2n+1,\bC)$ or $SO(4n,\bC)$.  Then
\[
C^*_{r,\bR}(G)\cong C_0^{\bR}(\widehat G_r)\otimes \cK_{\bR}
\]
is a stable real
continuous-trace algebra with trivial Dixmier-Douady class.
\end{theorem}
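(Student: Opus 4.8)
The plan is to reduce everything to the representation theory of the maximal compact subgroup $K$, in the spirit of Example~\ref{ex:SL2}. Here $K=SO(2n+1)$ (respectively $K=SO(4n)$), since $SO(m,\bC)\cap U(m)=SO(m,\bR)$. By \cite[Proposition~4.1]{MR724030}, $\widehat G_r$ is Hausdorff and $C^*_r(G)\cong C_0(\widehat G_r)\otimes\cK$ is a stable continuous-trace algebra with vanishing complex Dixmier--Douady class; concretely, the points of $\widehat G_r$ are the unitary principal series $\pi_{\chi,\nu}=\Ind_B^G(\chi\otimes e^{i\nu})$ (all irreducible, as for complex reductive groups in general; cf.\ Example~\ref{ex:SL2}), where $B=MAN$ is a minimal parabolic, $M=T$ is a maximal torus of $K$, $\chi\in\widehat T$, and $\nu$ runs over the real dual $\mathfrak a^\ast$ of $\mathfrak a$ (the Lie algebra of $A$), with $\pi_{\chi,\nu}\cong\pi_{\chi',\nu'}$ precisely when $(\chi,\nu)$ and $(\chi',\nu')$ are conjugate under the Weyl group $W$. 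In particular each connected component of $\widehat G_r$ is a single family $\{\pi_{\chi,\nu}:\nu\in\mathfrak a^\ast\}$ modulo the stabilizer of $\chi$ in $W$, hence a quotient of $\mathfrak a^\ast\cong\bR^{\dim\mathfrak a}$ by a finite group of linear maps, i.e.\ a contractible cone; so $\widehat G_r$ is a countable disjoint union of contractible spaces and $H^2(\widehat G_r,\bZ/2)=0$.

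I would first check that the involution $\iota$ of Proposition~\ref{prop:Ahatinv} on $\widehat G_r$ is trivial. As in Example~\ref{ex:SL2}, $\iota$ sends $[\pi_{\chi,\nu}]$ to $[\overline{\pi_{\chi,\nu}}]=[\pi_{-\chi,-\nu}]$; but $-1$ lies in the Weyl group of both $SO(2n+1,\bC)$ (type $B_n$) and $SO(4n,\bC)$ (type $D_{2n}$), so $(-\chi,-\nu)$ is $W$-conjugate to $(\chi,\nu)$ and $\iota=\mathrm{id}$. (This is also a special case of Theorem~\ref{thm:Adams}.) By Corollary~\ref{cor:3types} and Proposition~\ref{prop:Ahatinv}, $C^*_{r,\bR}(G)$ is then a real continuous-trace algebra over the Real space $(\widehat G_r,\mathrm{id})$, and no irreducible representation of $G$ is of complex type.

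The main step is to show that every point of $\widehat G_r$ is of \emph{real} (not quaternionic) type, so that the sign choice of Definition~\ref{def:sign} is identically $+$. Fix $\pi=\pi_{\chi,\nu}$. Its restriction to $K$ is the minimal principal series $\Ind_T^K\chi$, independent of $\nu$, so by Frobenius reciprocity the irreducible $K$-representation $V_\mu$ of highest weight $\mu$ occurs in $\pi|_K$ with multiplicity $m_\mu$ equal to the multiplicity of the weight $\chi$ in $V_\mu$. Taking $\mu=\chi^+$, the dominant $W$-conjugate of $\chi$, this multiplicity is $1$ (it equals the weight-multiplicity of the highest weight of $V_{\chi^+}$); thus $V_{\chi^+}$ is a multiplicity-one $K$-type of $\pi$. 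Now by \cite[Theorem~7.7]{MR0252560} (resp.\ \cite[Theorem~7.9]{MR0252560}), equivalently \cite[VI.(5.4)(vi)]{MR1410059} (resp.\ \cite[VI.(5.5)(ix)]{MR1410059}), every irreducible representation of $K=SO(2n+1)$ (resp.\ $SO(4n)$) is of real type; in particular $\overline{V_\mu}\cong V_\mu$ for all $\mu$. Since $\pi$ is fixed by $\iota$, Theorem~\ref{thm:AandAC} (see the remark following it) shows it carries a $G$-invariant conjugate-linear operator $\varepsilon$, unique up to real scalars, whose square $\varepsilon^2$ is a nonzero real scalar of a definite sign (positive for real type, negative for quaternionic type). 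Restricting to $K$, $\varepsilon$ preserves each $K$-isotypic subspace $V_\mu\otimes\bC^{m_\mu}$, where it has the form $j_\mu\otimes J_\mu$ with $j_\mu$ a real structure on $V_\mu$ ($j_\mu^2=\mathrm{id}$) and $J_\mu$ conjugate-linear on $\bC^{m_\mu}$; hence $\varepsilon^2=\mathrm{id}\otimes J_\mu^2$, and were $\varepsilon^2$ negative, $\bC^{m_\mu}$ would admit a quaternionic structure, forcing $m_\mu$ even for every $\mu$ occurring. Since $m_{\chi^+}=1$, this is impossible, so $\varepsilon^2$ is positive and $\pi$ is of real type. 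As the sign choice is locally constant on $(\widehat G_r)^\iota=\widehat G_r$ and each component is connected, it is $+$ everywhere.

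It remains to assemble the pieces via Moutuou's classification (Theorem~\ref{thm:Mout}): $C^*_{r,\bR}(G)$ is a real continuous-trace algebra over $(\widehat G_r,\mathrm{id})$ with trivial sign choice, and its real Dixmier--Douady invariant lies in $H^2_\iota(\widehat G_r,\cT)\cong H^2(\widehat G_r,\bZ/2)=0$ (using Theorem~\ref{thm:MoutvsGroth} and the discussion after it, since $\iota$ is trivial). Hence $C^*_{r,\bR}(G)$ is spectrum-fixingly Morita equivalent to $C_0^{\bR}(\widehat G_r)$; since $C^*_r(G)$ is stable and the stabilization can be chosen $\sigma$-equivariantly, $C^*_{r,\bR}(G)$ is itself stable, and therefore $C^*_{r,\bR}(G)\cong C_0^{\bR}(\widehat G_r)\otimes\cK_{\bR}$, a stable real continuous-trace algebra with trivial Dixmier--Douady class. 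The main obstacle is the third step: passing from ``$\iota$ trivial'' (all representations self-dual) to ``all representations of real type,'' which is exactly where the input that $SO(2n+1)$ and $SO(4n)$ have only real-type irreducibles, together with the existence of a multiplicity-one lowest $K$-type, is indispensable. The contrasting case $Sp(n,\bC)$ --- where $-1$ still lies in $W$ but every irreducible representation of $Sp(n)$ is quaternionic, so one ends up with a constant $-$ sign choice --- shows such input cannot be avoided.
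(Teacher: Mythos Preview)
Your proof is correct and follows essentially the same route as the paper's: trivial involution via $-1\in W$ (or Adams' theorem), a multiplicity-one $K$-type $V_{\chi^+}$ of real type forcing the whole principal series to be of real type (the paper phrases this as ``otherwise the invariant skew-symmetric form would restrict to a skew-symmetric invariant form on $\rho$,'' which is your $\varepsilon$-argument), and vanishing of the real Dixmier--Douady class from contractibility of the components of $\widehat G_r$.

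One small correction to your closing parenthetical: it is not true that \emph{every} irreducible representation of the compact group $Sp(n)$ is quaternionic; only ``half'' of them are (see \cite[Theorem~7.6]{MR0252560} or \cite[VI.(5.3)(vi)]{MR1410059}, and already $Sp(1)=SU(2)$ in Example~\ref{ex:SU2}). Consequently the sign choice for $Sp(n,\bC)$ is not constantly $-$; infinitely many components of each sign occur, as the paper records in Theorem~\ref{thm:SpnC}. This does not affect your proof of the present theorem.
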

\begin{proof}
By Theorem \ref{thm:Adams}, the involution on $\widehat G_r$ is
trivial, and the sign choice invariant has to be constant on each
connected component.  Let $M$ be a Cartan subgroup of $G$, let $W$ be
its Weyl group, let $B=MN$
be a Borel subgroup, let $K$
be a maximal compact subgroup (an orthogonal group $SO(2n+1)$ or
$SO(4n)$), and let $T=K\cap M$, a maximal torus in $K$.
The irreducible representations in the reduced dual are all
principal series $\Ind_B^G \chi$, where $\chi$ is a character of $M$
extended to a character of $B$ by taking it to be trivial on $N$.
When restricted to $K$, this is the same as $\Ind_T^K\chi|_T$.
If $\rho\in \widehat K$ is a $K$-type, then by Frobenius reciprocity,
$\rho$ appears in this induced representation with multiplicity equal
to the multiplicity of $\chi|_T$ in $\rho|_T$.  So given $\chi$, take
$\rho$ to have highest weight in the $W$-orbit of $\chi|_T$, and we see
that $\rho$ occurs in $\Ind_B^G \chi$ with multiplicity $1$.  Since
$\rho$ is real and $\Ind_B^G \chi$ is of either real or quaternionic
type, we see that its being of real type is the only possibility.
(Otherwise the invariant skew-symmetric form on the representation
would restrict to a skew-symmetric invariant form on $\rho$.)
Thus $C^*_{r,\bR}(G)$ is a stable real continuous-trace algebra with all
irreducible representations of real type.  The Dixmier-Douady
invariant has to vanish since as pointed out in
\cite[p.\ 277]{MR724030}, all connected components of $\widehat G_r$
are contractible.
\end{proof}
In a similar fashion we have
\begin{theorem}
\label{thm:SpnC}
Let $G=Sp(n,\bC)$, let $M$ be a Cartan subgroup, and let $W$ be its
Weyl group. Let $K=Sp(n)$, a
maximal compact subgroup, and let $T=M\cap K$, a maximal torus in $K$
Then $C^*_{r,\bR}(G)$ is a stable real
continuous-trace algebra which is Morita equivalent to a direct sum of
pieces of the form $C_0^\bR(Y)$ and $C_0^\bH(Y)$.  Here $Y$ ranges
over the components of $\widehat M/W$.  Infinitely many pieces of each
type {\lp}real or quaternionic{\rp} occur. If $\chi\in \widehat M$ and
$\chi|_T$ is its ``discrete parameter'', then the associated summand
in $C^*_{r,\bR}(G)$ is of real type if and only if the representation
of $K$ with highest weight in the $W$-orbit of $\chi$ is of real type.
\end{theorem}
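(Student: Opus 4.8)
The plan is to run the argument of Theorem~\ref{thm:SOnC} and Example~\ref{ex:SL2}, now keeping careful track of the two types that occur. Fix a Cartan subgroup $M\cong(\bC^\times)^n$ of $G=Sp(n,\bC)$, a Borel subgroup $B=MN\supset M$, the maximal compact $K=Sp(n)$, and $T=M\cap K\cong\bT^n$, a maximal torus of $K$. As for $SL(2,\bC)$, the reduced dual $\widehat G_r$ consists exactly of the unitary principal series $\Ind_B^G\chi$ (with $\chi$ a unitary character of $M$, extended trivially over $N$), all of which are irreducible, and $\Ind_B^G\chi\cong\Ind_B^G\chi'$ iff $\chi'=w\cdot\chi$ for some $w\in W$; hence $\widehat G_r\cong\widehat M/W$, and by \cite[p.~277]{MR724030} every connected component of $\widehat G_r$ is contractible. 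Since $-1\in W=W(C_n)$, we have $\overline\chi=(-1)\cdot\chi$, so $\overline{\Ind_B^G\chi}\cong\Ind_B^G\chi$: the class of every principal series is fixed by the involution $\iota$ of Proposition~\ref{prop:Ahatinv}, so $\iota$ is trivial on $\widehat G_r$, and by the remark following Theorem~\ref{thm:AandAC} every principal series is of real or quaternionic type. (This last point is also a special case of Theorem~\ref{thm:Adams}, as the complexification of $G$ is $Sp(n,\bC)\times Sp(n,\bC)$, whose Weyl group contains $-1$, while every irreducible representation of $Sp(n)$ is of real or quaternionic type.)

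With $\iota$ trivial, the real Dixmier-Douady group $H^2_\iota(\widehat G_r,\cT)$ reduces to $H^2(\widehat G_r,\bZ/2)$ (the first example following Theorem~\ref{thm:MoutvsGroth}), which vanishes because $\widehat G_r$ is a disjoint union of contractible spaces. By Moutuou's Theorem~\ref{thm:Mout}, $C^*_{r,\bR}(G)$ is therefore determined up to spectrum-fixing Morita equivalence by its sign choice alone, which by Definition~\ref{def:sign} is constant on each connected component $Y$ of $\widehat G_r=\widehat M/W$. This already yields the asserted Morita equivalence to a direct sum with one summand per component, equal to $C_0^\bR(Y)$ where the sign on $Y$ is $+$ and to $C_0^\bH(Y)$ where it is $-$.

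To identify the sign on the component $Y$ attached to the $W$-orbit of a discrete parameter $\chi|_T\in\widehat T$, restrict a principal series to $K$: $(\Ind_B^G\chi)|_K\cong\Ind_T^K(\chi|_T)$, so by Frobenius reciprocity a $K$-type $\rho$ occurs with multiplicity $\dim\rho_{\chi|_T}$, the dimension of the $\chi|_T$-weight space. Let $\rho_0$ be the irreducible representation of $Sp(n)$ whose highest weight $\lambda_0$ is the dominant member of the $W$-orbit of $\chi|_T$; since weight multiplicities are $W$-invariant and a highest-weight space is one-dimensional, $\rho_0$ occurs in $(\Ind_B^G\chi)|_K$ with multiplicity exactly $1$. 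Now $\pi:=\Ind_B^G\chi$, being of real or quaternionic type, carries a nondegenerate $G$-invariant bilinear form $\omega$, symmetric in the real case and skew in the quaternionic case. Restrict $\omega$ to the multiplicity-one $\rho_0$-isotypic subspace $V\subset\pi|_K$. Because $\rho_0\cong\rho_0^*$ (every $Sp(n)$-representation is self-dual), $\omega|_V$ cannot vanish identically: otherwise the $G$-isomorphism $\pi\to\pi^*$ induced by $\omega$ would carry the unique copy of $\rho_0$ in $\pi$ into $\Ann(V)$, which by multiplicity one contains no copy of $\rho_0$. Hence $\omega|_V$ is a nonzero $K$-invariant bilinear form on $\rho_0$, so it has the symmetry type of the (essentially unique) invariant form on $\rho_0$. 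Therefore $\pi$ is of real type if and only if $\rho_0$ is, which is the stated criterion; the reverse implication is immediate from the same computation, since a skew invariant form on $\pi$ restricts to a skew one on $V$ and a symmetric one to a symmetric one.

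Finally, that infinitely many summands of each type occur follows from the classical fact (\cite[Theorem~7.8]{MR0252560}, \cite[VI.(5.5)]{MR1410059}) that the irreducible representation of $Sp(n)$ of highest weight $(\lambda_1\ge\cdots\ge\lambda_n\ge0)$ is of real type when $\sum_i\lambda_i$ is even and of quaternionic type when $\sum_i\lambda_i$ is odd: the highest weights $(2k,0,\dots,0)$ and $(2k+1,0,\dots,0)$ for $k\ge0$ give infinitely many distinct $W$-orbits of discrete parameters, hence infinitely many distinct components $Y$, of each type. I expect the one genuinely delicate point to be the middle of the third paragraph---checking that the restriction of the $G$-invariant form to the distinguished multiplicity-one $K$-type is non-degenerate, and then reading off its symmetry type---since everything else is bookkeeping with the principal series together with applications of Theorems~\ref{thm:Adams} and~\ref{thm:Mout}.
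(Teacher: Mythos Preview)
Your proof is correct and follows the same route as the paper's: the paper's proof of Theorem~\ref{thm:SpnC} is literally one sentence referring back to the proof of Theorem~\ref{thm:SOnC}, and your argument reproduces that proof (triviality of $\iota$ via $-1\in W$ and Theorem~\ref{thm:Adams}, vanishing of the Dixmier-Douady class by contractibility of the components, and detection of the sign choice via the multiplicity-one $K$-type $\rho_0$ with highest weight in $W\cdot(\chi|_T)$). Your third paragraph usefully unpacks what the paper states in a parenthetical remark---that the invariant bilinear form on $\pi$ restricts nontrivially to the $\rho_0$-isotypic subspace---and your justification (the $K$-equivariant isomorphism $\pi\to\pi^*$ must carry the unique copy of $\rho_0$ to the unique copy of $\rho_0^*\cong\rho_0$, hence not into $\Ann(V)$) is exactly the right way to see it.
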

\begin{proof}
This is exactly the same as the proof of Theorem \ref{thm:SOnC}, the only
difference being that ``half'' of the representations of $K$ are of
quaternionic type (see \cite[Theorem 7.6]{MR0252560} and
\cite[VI.(5.3)(vi)]{MR1410059}). 
\end{proof}
\end{example}  

\subsection{Orientifold string theories}
\label{sec:strings}

A last area where real {\Ca}s and their $K$-theory seem to play a
significant role is in the study of orientifold string theories in
physics.  (See for example \cite[\S8.8]{MR2151029} and
\cite[Ch.\ 13]{MR2151030}.)
Such a theory is based on a spacetime manifold $X$ equipped
with an involution $\iota$, and the theory is based on a ``sigma
model'' where the fundamental ``strings'' are equivariant maps from a
``string worldsheet'' $\Sigma$ (an oriented Riemann surface) to $X$,
equivariant with respect to the ``worldsheet
parity operator'' $\Omega$, an orientation-reversing involution on
$\Sigma$, and $\iota$, the involution on $X$.  Restricting attention only
to equivariant strings is basically what physicists often call
\emph{GSO projection}, after Gliozzi-Scherk-Olive \cite{GSO}, and introduces
enough flexibility in the theory to get rid of lots of unwanted
states. In order to preserve a reasonable amount of supersymmetry,
usually one assumes that the spacetime manifold (except for a flat
Minkowski space factor, which we can ignore here) is chosen to be a
Calabi-Yau manifold, that is, a complex K\"ahler manifold $X$ with
vanishing first Chern class, and that the involution of $X$ is either
holomorphic or anti-holomorphic.
If we choose $X$ to be compact, then in
low dimensions there are very few possibilities --- if $X$ has complex
dimension $1$, then it is an elliptic curve, and if $X$ has complex
dimension $2$, then it is either a complex torus or a $K3$ surface.
In the papers \cite{MR3267662,MR3316647}, the case of an elliptic
curve was treated in great detail. Example \ref{ex:K3} and
Theorem \ref{thm:DDK3} were motivated by the case of $K3$ surfaces,
which should also be of great physical interest.

Now we need to explain the connection with real {\Ca}s and their
$K$-theory.  An orientifold string theory comes with two kinds of
important submanifolds of the spacetime manifold $X$: \emph{D-branes},
which are submanifolds on which ``open'' strings --- really, compact
strings with boundary --- can begin or end
(where we specify boundary conditions of Dirichlet or Neumann type),
and \emph{O-planes}, which are the connected components of the fixed
set of the involution $\iota$.  There are charges attached to these
two kinds of submanifolds. D-branes have charges in $K$-theory
\cite{MR1606278,MR1674715}, where
the kind of $K$-theory to be used depends on the specific details of
the string theory, and should be a variant of $KR$-theory for
orientifold theories.  The O-planes have $\pm$ signs which determine
whether the Chan-Paton bundles restricted to them have real or
symplectic type.  These sign choices result in ``twisting'' of the
$KR$-theory, such as appeared above in Definition \ref{def:sign}. In
addition, there is a further twisting of the $KR$-theory due to the
``$B$-field'' which appears in the Wess-Zumino term in the string
action.  It would be too complicated to explain the physics involved,
but mathematically, the $B$-field gives rise to a class in Moutuou's
$H^2_\iota(X, \cT)$. But in short, the effect of the O-plane charges
and the $B$-field is to make the D-brane charges live in twisted
$KR$-theory, i.e., in the $K$-theory of a real continuous-trace
algebra determined by the O-plane charges and the $B$-field.  In this
way, (type II) orientifold string theories naturally lead
to $K$-theory of real continuous-trace algebras, which is
most interesting from the point of view of physics when $(X,\iota)$ is
a Calabi-Yau manifold with a holomorphic or anti-holomorphic
involution \cite{MR3267662,MR3316647}.

An important aspect of string theories is the existence of
\emph{dualities} between one theory and another.  These are cases
where two seemingly different theories predict the same observable
physics, or in other words, are equivalent descriptions of the same
physical system.  The most important examples of such dualities are
\emph{T-duality}, or target-space duality, where the target space $X$
of the model is changed by replacing tori by their duals, and the very
closely related \emph{mirror symmetry} of Calabi-Yau manifolds. These
dualities do not have to preserve the type of the theory (IIA or IIB)
--- in fact, in the case of T-duality in a single circle, the type is
reversed --- and they frequently change the geometry or topology of
the spacetime and/or the twisting (sign choice and/or
Dixmier-Douady class).  The possible theories with $X$ an elliptic
curve and $\iota$ holomorphic or anti-holomorphic were studied in
\cite{Gao:2010ava,MR3267662,MR3316647}, and found to be grouped into 3
classes, each containing 3 or 4 different theories.  All of the
theories in a single group are related to one another by dualities,
and theories in two different groups can never be related by
dualities.  One way to see this is via the twisted $KR$-theory
classifying the D-brane charges.  Theories which are dual to one
another must have twisted $KR$-groups which agree up to a degree
shift, while if the $KR$-groups are non-isomorphic (even after a
degree shift), the two theories cannot possibly be equivalent.
Thus calculations of twisted $KR$-theory provide a methodology for
testing conjectures about dualities in string theory.

In the case where $X$ is an elliptic curve and $\iota$ is holomorphic
or anti-holomorphic, the twisted $KR$-groups were computed in
\cite{MR3267662,MR3316647}. In one group of three theories ($\iota$
the identity, $\iota$ anti-holomorphic with a fixed set with two
components and with trivial sign choice, and $\iota$ holomorphic with 
four isolated fixed points), the $KR$-theory turned out to be
$KO^{-\bullet}(T^2)$, up to a degree shift. In the next group ($\iota$
holomorphic and free, $\iota$ anti-holomorphic and free, $\iota$
holomorphic with four fixed points with sign choice $(+,+,-,-)$, and
$\iota$ anti-holomorphic with a fixed set with two
components and sign choice $(+,-)$), the
groups turned out to be $KSC^{-\bullet}\oplus KSC^{-\bullet-1}$ up to
a degree shift.  In the last group ($\iota$ the identity but the
Dixmier-Douady invariant ($B$-field) nontrivial, $\iota$ holomorphic with 
four isolated fixed points and sign choice $(+,+,+,-)$, and $\iota$
anti-holomorphic with fixed set a circle), the $KR$-theory turned out
to be $KO^{-\bullet}\oplus KO^{-\bullet}\oplus K^{-\bullet-1}$ up to a
degree shift.  The $KR$-groups in one T-duality group are not
isomorphic to those in another, so there cannot be any additional
dualities between theories.

Rather curiously, it turns out (as was shown in \cite{MR3305978}) that
all of the isomorphisms of twisted $KR$-groups associated to dualities
of elliptic curve orientifold theories arise from the real Baum-Connes
isomorphisms for certain solvable groups with $\bZ$ or $\bZ^2$ as a
subgroup of finite index.  This suggests a rather mysterious
connection between representation theory and duality for string
theories, which we intend to explore further.  It will be especially
interesting to study dualities between orientifold theories
compactified on abelian varieties of dimension $2$ or $3$ and on
K3-surfaces, and ultimately on simply connected Calabi-Yau $3$-folds.

\bibliographystyle{amsplain} \bibliography{realC} 
\end{document}